\documentclass{siamart220329}

\usepackage{amssymb,amsmath,amsfonts,amscd,verbatim,stmaryrd}
\usepackage{color,graphicx}
\usepackage{algorithm,algorithmic}
\usepackage{upgreek,comment,url}
\usepackage{mystylefile}
\usepackage{float}
\overfullrule=0pt

\usepackage{color}
\newcommand{\bmat}[1]{\begin{bmatrix}#1 \end{bmatrix}}

\title{Stable Rank and Intrinsic Dimension\\ of 
Real and Complex Matrices\thanks{The work of the first author was supported in part by NSF grants DMS-1745654,
DMS-1760374 and CCF-2209510, and DOE grant DE-SC0022085. The work of the second author was supported in part by NSF grants DMS-1845406 and DMS-1745654.}
}
\author{Ilse C.F.\ Ipsen\thanks{Department of Mathematics, 
North Carolina State University, Raleigh, NC 27695-8205, USA,
\email{ipsen@ncsu.edu}} \and Arvind K.\ Saibaba\thanks{Department of Mathematics, North Carolina State University, Raleigh, NC 27695-8205, USA, \email{asaibab@ncsu.edu}}}
\headers{Stable rank and intrinsic dimension}{I.C.F. Ipsen and A.K. Saibaba}
\begin{document}
\maketitle

\begin{abstract}
\Ilse{The notion of `stable rank' of a matrix is central to the analysis of randomized matrix algorithms, covariance estimation, deep neural networks, and recommender systems.}
We compare the properties of the stable rank of a real or complex matrix, and the related
concept of 'intrinsic dimension' of a Hermitian positive semi-definite matrix to those of the classical rank.
Basic proofs and examples illustrate that the stable rank does not satisfy any of the 
 fundamental rank properties, while the intrinsic dimension satisfies
 a few.
 In particular, the stable rank and intrinsic dimension of
 a submatrix can exceed those of the original matrix;  adding a Hermitian positive semi-definite matrix can lower the intrinsic dimension 
 of the sum; and multiplication by a nonsingular matrix can
 drastically change the stable rank and the intrinsic dimension.
 We generalize the concept of stable rank to the $p$-stable rank in a Schatten $p$-norm, thereby unifying the concepts of stable rank and intrinsic dimension: The stable rank is the 2-stable rank,
 while the intrinsic dimension is the 1-stable rank of a 
 Hermitian positive semi-definite matrix.
 We derive sum and  product inequalities for the 
 $p$th root of the $p$-stable rank, and show that it is well-conditioned in the norm-wise absolute sense.
 The conditioning improves if the
 matrix and the perturbation are Hermitian positive semi-definite.
\end{abstract}

\begin{keywords}
Algebraic rank, Schatten p-norm, singular values, eigenvalues, trace, Frobenius norm, Hermitian positive
semi-definite matrices
\end{keywords}

\begin{MSCcodes}
15A3, 15A12, 15A18, 15A45, 65F55
\end{MSCcodes}

\section{Introduction}
The \textit{rank} of a real or complex matrix $\ma\in\cmn$ is equal to the 
number of non-zero singular values.
\Ilse{The rank determines the dimensions of the four fundamental subspaces of
$\ma$, and can be
viewed as quantifying the amount of information 
inherent in the matrix.}

\Ilse{However, the rank is ill-posed 
because it does not depend in a Lipschitz continuous manner on the elements
of the matrix. A tiny perturbation
in $\ma$ can change its rank. Rank deficiency can cause
difficulties in least squares/regression problems, bifurcation methods, and neural networks \cite{Stewart1984,Stewart1987,FZH2022}.}
\Ilse{\paragraph{Numerical rank} 
To circumvent the 
illposedness of the traditional rank,
one often resorts to the concept of `numerical rank'
\cite[Section 5.4.1]{GovL13}, which is the rank obtained
after setting a certain number of small singular values
equal to zero. The default computation of the numerical rank in 
Matlab\footnote{\url{https://www.mathworks.com/help/matlab/ref/rank.html}}
is the number of singular values larger than 
$\max\{m,n\}\mathtt{eps}(\|\ma\|_2)$, where the second factor
denotes the distance of $\|\ma\|_2$ to the next larger floating point number.}

\Ilse{Estimating the numerical rank requires effort.
For instance, a randomized algorithm for 
estimating the numerical rank of a $m\times n$ matrix with 
$m\geq n$ and desired numerical rank~$r$ requires 
$\mathcal{O}(mn\log{n}+r^3)$ arithmetic operations
\cite{MN24}.
}

\Ilse{\subsubsection*{Stable rank}
In contrast to the rank,}
the \textit{stable rank of a 
matrix}\footnote{\Ilse{Not to be confused with other concepts of stable rank, such as the stable rank of
an algebra \cite{Kawamura24} or the G-stable rank of a tensor \cite{Derksen22}.}}
\cite{rudelson2007sampling,Tropp2015,Versh18}
\begin{align*}
sr(\ma)\equiv \|\ma\|_F^2/\|\ma\|_2^2
\end{align*}
\Ilse{can be
viewed as `a continuous counterpart of the rank'
\cite[Section 1]{GFMA21}.}
According to \cite[Section 7.8]{Versh18}, the concept of 
`stable rank of a matrix', also called `effective rank' or `numerical rank', was introduced in~\cite[Remark 1.3]{rudelson2007sampling}
and `is a robust version of the classical, linear, 
algebraic rank' \cite[Section 7.6.1]{Versh18}. 
\Ilse{Therefore, the stable rank can serve as a quantitatively rigorous and efficient surrogate for the previously discussed
numerical rank \cite{LWZ2024}.}
\Ilse{In addition to robustness to small perturbations, the stable rank also has the advantages of differentiability, of being upper 
bounded
by the rank, and scaling invariance \cite[section 1]{STD20}.}

The stable rank `tends to be 
low when $\ma$ is close to a low rank matrix' 
\cite[Remark 1.3]{rudelson2007sampling}.
For instance, a matrix $\ma\in\cnn$ with singular values
$1/2^{j-1}$, $1\leq j\leq n$, has
\begin{align*}
\rank(\ma)=n\qquad \text{but}\qquad 
\sr(\ma)=\tfrac{4}{3}\left(1-\tfrac{1}{n}\right)\leq 
\tfrac{4}{3}.
\end{align*}


\Ilse{\subsubsection*{Intrinsic dimension} A}
related concept for Hermitian, or real symmetric, positive semi-definite matrices $\ma\in\cnn$
is that of \textit{intrinsic dimension} of a matrix
\cite{HKZ2012,Tropp2015,Versh18}
\begin{align*}
\intdim(\ma)\equiv \trace(\ma)/\|\ma\|_2.
\end{align*}

The stable rank and intrinsic dimension have the advantage
of being `stable
under small perturbations' \cite[Remark 1.3]{rudelson2007sampling}. Consequently, the stable
rank and the intrinsic dimension are well-posed,
in these sense of Hadamard \cite[Section 1.3]{Isaacson66}.

\subsubsection*{Applications}
Both, the stable rank and the intrinsic dimension appear in the random matrix theory literature, \Ilse{where they replace the explicit dependence of the bounds on the dimensions of the random matrices.} Examples include the matrix Bernstein and matrix Chernoff inequalities~\cite[Chapter 7]{Tropp2015}, and the analysis of covariance estimation~\cite{Vershynin2009,Versh18}.  
\Ilse{Below we discuss the importance of the stable 
rank in the 
analysis of randomized algorithms, covariance estimation, Deep Neural Networks, and recommender systems.}

\Ilse{\paragraph{Randomized algorithms} 
The concepts of stable rank and intrinsic dimension are crucial in the analysis of randomized matrix algorithms,
such as matrix multiplication~\cite{rudelson2007sampling}, trace estimation~\cite{roosta2015improved}, diagonal estimation~\cite{hallman2023monte}, and low-rank approximations~\cite{halko2011finding}. }

\Ilse{
For example, consider the Monte Carlo approximation of the Gram matrix $\ma\ma^T$ for a matrix $\ma \in \rmn$ with more columns than rows, $n\gg m$. Let
\begin{align*}
\ms = \frac{1}{\sqrt{N\pi_{t_j}}} \begin{bmatrix} \ve_{t_1} & \cdots & \ve_{t_N}\end{bmatrix}
\end{align*}
be a sampling matrix that selects $N$ columns with indices $\{t_1,\ldots,t_N\}$ uniformly, independently and with replacement from $\{1, \ldots, n\}$ according to the probabilities $\pi_j = \|\ma\ve_j \|_2^2/ \|\ma\|_F^2$, $1 \le j \le n$.
With the help of a matrix Bernstein concentration inequality,
one can bound the error due to randomization of the 
Monte Carlo approximation $(\ma\ms)(\ma\ms)^T$ as in \cite[Theorem 4.2]{HoI15}.
Given $ 0 < \epsilon \le 1$ and $0 < \delta < 1$, if the sampling
amount $N \geq 3\epsilon^{-2} \sr(\ma) \ln(4\sr(\ma)/\delta)$, then with a probability at least $1-\delta$ 
\[ \|\ma\ma^T - (\ma\ms)(\ma\ms)^T\|_2 \le \|\ma\ma^T\|_2.\] 
Thus, if the stable rank $\sr(\ma)$ is much smaller than the large 
dimension~$n$, then one can compute an accurate approximation to the Gram matrix $\ma\ma^T$ from just a few columns of~$\ma$.  
}


\Ilse{\paragraph{Covariance Estimation} 
 Consider $N$
independent and identically distributed samples $\vx_1,\dots,\vx_N$, drawn from a Gaussian distribution with zero mean and covariance $\msig$. To approximate~$\msig$, one can compute an unbiased
estimator using the Monte Carlo approximation
$\msig_N \equiv \frac1N \sum_{j=1}^N \vx_j \vx_j^T$.
An important question in probability and statistics is to determine the sampling amount $N$ so that
\begin{equation}\label{eqn:coverror} 
\E\left[ \| \msig - \msig_N\|_2 \right]\leq \varepsilon \|\msig\|_2.
\end{equation}
That is, the expectation of the relative error in the approximation~$\msig_N$ in the two-norm should not exceed~$\varepsilon$. By~\cite[Equation (1.1)]{koltchinskii2017concentration} implies 
\begin{align*}
\E\left[\| \msig - \msig_N\|_2\right] \leq C \left( \sqrt{\frac{\intdim(\msig)}{N}} + \frac{\intdim(\msig)}{N} \right) \|\msig\|_2,
\end{align*}
where $C$ is an absolute constant.
Therefore, a sampling amount of $N \sim \varepsilon^{-2} \intdim(\msig)$ is sufficient to ensure~\eqref{eqn:coverror}. If the intrinsic dimension $\intdim(\msig) \ll n$, then $\msig$ can be approximated with only a few samples.  } 

\Ilse{\paragraph{Deep Neural Networks (DNN)}
Consider a fully connected feed-forward DNN with Lipschitz activation $\phi$, and $L$ layers, where
each layer is represented by a 
weight matrix $\mw^{(\ell)}$, a 
bias $\vb^{(\ell)}$, 
and a normalization factor $\gamma^{(\ell)}$,
$1\leq \ell\leq L$ \cite[Section 2]{GFMA21}.
The feed-forward mappings applied to an input vector $\vx$ 
are defined as
\begin{align*}
\valpha^{(0)}(\vx)&\equiv \vx, \qquad
\valpha^{(\ell)}(\vx)\equiv 
\phi\left(\gamma^{(\ell)} \mw^{(\ell)}
\valpha^{(\ell-1)}(\vx)+\vb^{(\ell)}\right),
\qquad 1\leq \ell\leq L.
\end{align*}
We consider the generalization error and noise stability
of DNNs.}

\Ilse{The `generalization error' reflects the accuracy
with which the DNN makes predictions for new, unseen data \cite{AGNZ18}. One can bound the generalization error in terms of the expression 
\cite[Theorem 2.2]{AGNZ18} 
\begin{align}\label{e_ge}
\prod_{\ell=1}^{L}{\|\mw^{(\ell)}\|_2}
\sum_{\ell=1}^L{\sr(\mw^{(\ell)})}.
\end{align}
As a consequence, decreasing the stable ranks of the
weight matrices $\mw^{(\ell)}$ can decrease the
generalization error of the  DNN and improve its classification accuracy~\cite{STD20}.
That is why the sum of the stable ranks 
represents `a natural measure of the true parameter count'
of the DNN \cite[Section 2.2]{AGNZ18}.
}

\Ilse{The `noise stability' of a DNN reflects the effect of noise injected in a layer \cite[Section 3]{AGNZ18}.
Specifically, the noise sensitivity of a linear map $\mw\in\rmn$ 
with respect to standard Gaussian noise $\mathcal{N}(\vzero, \mi_n)$ at a nonzero vector
$\vx\in\rn$ is defined as \cite[Definition~3]{AGNZ18}
\begin{align*}
\Psi(\mw,\vx) \equiv\E_{\veta\sim\mathcal{N}(\vzero,\mi)}
\left[\frac{\|\mw(\vx+\veta)-\mw\vx\|_2^2}{\|\mw\vx\|_2^2}\right],
\end{align*}
that is, the expectation of the relative change in 
$\mw\vx$ in the two-norm.}

\Ilse{The noise sensitivity of $\mw$ is bounded below by the stable rank of $\mw$ \cite[Proposition 3.1]{AGNZ18},
\begin{align*}
\Psi(\mw,\vx)=\frac{\|\mw\|_F^2\|\vx\|_2^2}{\|\mw\vx\|_2^2}\geq \sr(\mw).
\end{align*}
}

\Ilse{\paragraph{Collaborative Filtering (CF)}
This is a type of recommender
system that selects items for a particular user 
based on past recommendations for other users. 
The cost of working with the large dimensional user-item matrix
can be reduced by approximately factoring it into two separate,
lower dimensional user and item embedding matrices. 
It can be shown \cite{LWZ2024} that the stable rank of these embedding matrices
is highly correlated with CF performance. In particular and in contrast to DNNs, higher stable rank is better,
because it correlates with higher performing loss functions \cite[Section 3.2]{LWZ2024}.}

\subsection{Contributions}
We derive numerous properties of the stable rank and intrinsic dimension. Our proofs rely on basic results from matrix analysis such as eigenvalue and singular value inequalities.

\begin{enumerate}
\item Unlike the classical rank, the stable rank
and intrinsic dimension can increase when deleting a row and/or a column from the matrix (Section~\ref{s_delete}).
\item Unlike the classical rank, the stable rank
does not satisfy the rank-sum inequality, but the
intrinsic dimension does (Section~\ref{s_ranksumin}).
\item Like the classical rank, the stable rank and intrinsic dimension of a Hermitian positive semi-definite
matrix can increase by at most one, if a Hermitian positive
semi-definite matrix of rank~1 is added.
\item Unlike the classical rank, the stable rank and
intrinsic dimension do not satisfy the rank-product
inequality (Section~\ref{s_rankprod}).
\item Unlike the classical rank, multiplication
by a nonsingular matrix can increase the stable rank 
to its maximum, or decrease it to its minimum (Section~\ref{s_nonsing}).
\item Unlike the classical rank, the rank of the cross
product matrix\Ilse{\footnote{The cross product matrix of a matrix $\ma$ is $\ma^*\ma$ \cite[Section 2]{Stewart1984}, \cite[Section 1]{Stewart1987}.}}
can be strictly smaller than the rank of the
matrix itself (Section~\ref{s_cross}).

\item We generalize the stable rank to the $p$-stable rank in a general Schatten $p$-norm (Definition~\ref{d_pstable}), thereby unifying the concept of stable rank and intrinsic dimension: The stable rank represents the 2-stable rank, while the intrinsic dimension is the 1-stable rank of a Hermitian positive semi-definite matrix (Remark~\ref{r_unify}).
\item We derive sum inequalities  
for the  $p$th root of the $p$-stable rank 
(Theorem~\ref{t_sr8}, Corollary~\ref{c_sr8}, 
and Theorem~\ref{t_sr9}), and product inequalities
for the $p$-stable rank
(Theorem~\ref{t_srp1}, Corollary~\ref{c_srp1} and Theorem~\ref{t_eq7}).
\item We show that the $p$th root of the $p$-stable rank is well-conditioned in the norm-wise absolute sense
(Theorem~\ref{t_cond1}). In particular, the intrinsic dimension and the square root of the 2-stable rank are well-conditioned (Corollaries \ref{c_cond1} and~\ref{c_cond2}).
\end{enumerate}

\subsection{Overview}
After reviewing the basic properties of the stable rank and intrinsic dimension (Section~\ref{s_prop}), we present
simple examples to illustrate that the stable rank satisfies neither the rank inequality for
deleting a row or column, nor the rank-sum
or the rank-product inequalities, while the intrinsic dimension satisfies only two of the
rank-sum inequalities (Section~\ref{s_rankineq}).
Furthermore, we illustrate that the stable rank and intrinsic dimension satisfy neither the rank product equality
for non-singular matrices
nor the matrix cross product equality (Section~\ref{s_rankeq}).
At last, we extend the notion of stable rank and intrinsic dimension to the $p$-stable rank, derive
sum inequalities for the $p$th root of the $p$-stable
rank and
product inequalities for the $p$-stable rank, and
show that the $p$th root of the $p$-stable rank
is well-conditioned in the
norm-wise absolute sense (Section~\ref{s_ext}).

\subsection{Notation and auxiliary results}
The $n\times n$ identity matrix is $\mi_n$.
The conjugate transpose 
of $\ma\in\cmn$ is 
$\ma^*\in\cnm$.
The singular values of $\ma\in\cmn$ are 
$\sigma_1(\ma)\geq \cdots \geq \sigma_{\min\{m,n\}}\geq 0$.
The two-norm condition number of a non-singular matrix $\ma\in\cnn$ is
$\kappa_2(\ma)=\|\ma\|_2\|\ma^{-1}\|_2=\sigma_1(\ma)/\sigma_n(\ma)$.
The eigenvalues of a Hermitian matrix $\ma\in\cnn$ are 
$\lambda_1(\ma)\geq \cdots\geq \lambda_n(\ma)$.

We exploit the unitary invariance of the 
two- and
Frobenius norms. 
That is, if $\ma\in\cmn$ and $\mq\in\complex^{k\times m}$
with $\mq^*\mq=\mi_m$, then $\|\mq\ma\|_2=\|\ma\|_2$
and $\|\mq\ma\|_F=\|\ma\|_F$.

For Hermitian positive semi-definite matrices
$\ma,\mb\in\cnn$,
Weyl's monotonicity theorem 
\cite[Section 10.3]{Par98} implies
\begin{align}\label{e_weyl}
\|\ma+\mb\|_2=\lambda_1(\ma+\mb)\geq \lambda_1(\ma)+\lambda_n(\mb)\geq 
\lambda_1(\ma)=\|\ma\|_2.
\end{align}

\section{Properties of the stable rank and intrinsic dimension}\label{s_prop}
We review basic properties of the stable rank (Section~\ref{s_st})
and intrinsic dimension (Section~\ref{s_id}),
and relate the two (Section~\ref{s_idsr}).

\subsection{Stable rank}\label{s_st}
The stable rank reflects how fast the singular values of a matrix decrease from largest to smallest.

\begin{definition}[Remark 1.3 in \cite{rudelson2007sampling}, Section 2.1.5 in \cite{Tropp2015}, Definition 7.6.7 in \cite{Versh18}]
The {\rm stable rank} of a non-zero matrix $\ma\in\cmn$ is
\begin{align*}
\sr(\ma)\equiv\|\ma\|_F^2/\|\ma\|_2^2.
\end{align*}
The stable rank of the zero matrix is
$\sr(\vzero_{m\times n})=0$.
\end{definition}

We can relate the stable rank to the rank via
$\|\ma\|_F^2\leq \rank(\ma)\|\ma\|_2^2$ \cite[P2.4.7]{GovL13}.
\Ilse{If $\ma\neq \vzero$, then}
\begin{align*}
1\leq\sr(\ma)\leq \rank(\ma)\leq \min\{m,n\}.
\end{align*}
\Ilse{An immediate consequence is a potentially
much tighter relation between Frobenius and two norms,
\begin{align}\label{e_ftnorm}
\|\ma\|_F\leq\sqrt{\sr(\ma)} \>\|\ma\|_2.
\end{align}
This, in turn, implies the improvement
\begin{align}\label{e_ftnorm2}
\|\ma\mb\|_F \le \sqrt{\min\{\sr(\ma),\sr(\mb),\sr(\ma\mb)\}}\|\ma\|_2\|\mb\|_2
\end{align}
over the traditional submultiplicative bound
\begin{align*}
\|\ma\mb\|_F\leq \sqrt{\min\{\rank(\ma),\rank(\mb),\rank(\ma\mb) \}} \|\ma\|_2\|\mb\|_2 .
\end{align*}
}

\begin{example}\label{ex_sr2}
The following are special cases where $\sr(\ma)=\rank(\ma)$.
\begin{itemize}
\item If $\rank(\ma)=1$ then $\sr(\ma)=1$. 
\item If $\ma=\alpha\mU$ for some $\alpha\neq 0$ and unitary, or real orthogonal, $\mU\in\cnn$, then
$\sr(\ma)=n=\rank(\ma)$.
\item If $\ma\in\cmn$ with $\rank(\ma)=r\geq 1$
has singular values $\sigma_1(\ma)=\cdots=\sigma_r(\ma)>0$,
then $\sr(\ma)=r=\rank(\ma)$.
\item If $\ma$ is an orthogonal projector, 
then $\sr(\ma) = \|\ma\|_F^2 = \rank(\ma)$.
\end{itemize}
\end{example}

The stable rank of a block diagonal matrix is bounded above by the sums of the stable ranks of its diagonal blocks. 

\begin{example}[Section 7.3.3 in \cite{Tropp2015}]
Let
\begin{align*}
\ma=\begin{bmatrix}\ma_{11} & \\ & \ma_{22}\end{bmatrix}
\end{align*}
where $\ma_{11}\in\complex^{k\times k}$ and $\ma_{22}\in\complex^{\ell\times \ell}$. Then
\begin{align*}
\min\{\sr(\ma_{11}),\sr(\ma_{22})\}\leq \sr(\ma)\leq \sr(\ma_{11})+\sr(\ma_{22}).
\end{align*}
This follows from the elements of $\ma_{11}$ and $\ma_{22}$ occupying
distinct positions so that
\begin{align*}
\sr(\ma)=\frac{\|\ma\|_F^2}{\|\ma\|_2^2}=
\frac{\|\ma_{11}\|_F^2+\|\ma_{22}\|_F^2}{\max{\{\|\ma_{11}\|_2^2, \|\ma_{22}\|_2^2\}}}\leq \sr(\ma_{11})+\sr(\ma_{22}).
\end{align*}
\end{example}

\subsection{Intrinsic dimension}\label{s_id}
The intrinsic dimension of a Hermitian positive semi-definite matrix can be interpreted as a stable rank.

\begin{definition}[\cite{HKZ2012}, Definition 7.1.1 in \cite{Tropp2015},
Remark~5.6.3 in \cite{Versh18}]
The {\rm intrinsic dimension} of a non-zero Hermitian positive semi-definite matrix $\ma\in\cnn$ is
\begin{align*}
\intdim(\ma)\equiv\trace(\ma)/\|\ma\|_2.
\end{align*}
The intrinsic dimension of the zero matrix is
$\intdim(\vzero_{n\times n})=0$.
\end{definition}

The intrinsic dimension of \Ilse{a non-zero matrix} $\ma\in\cnn$ is related to the rank via
\begin{align*}
1\leq \intdim(\ma)\leq \rank(\ma)\leq n.
\end{align*}

\begin{example}
The following are special cases where $\intdim(\ma)=\rank(\ma)$.
\begin{itemize}
\item If $\rank(\ma)=1$ then $\intdim(\ma)=1$. 
\item If $\ma=\alpha\mi_n$ for some $\alpha> 0$, then $\intdim(\ma)=n=\rank(\ma)$.
\item If $\ma\in\cnn$ is Hermitian positive semi-definite with $\rank(\ma)=r\geq 1$ and
has eigenvalues $\lambda_1(\ma)=\cdots=\lambda_r(\ma)>0$,
then $\intdim(\ma)=r=\rank(\ma)$.
\item If $\ma$ is an orthogonal projector, then $\intdim(\ma) = \trace(\ma) = \rank(\ma)$.
\end{itemize}
\end{example}

The intrinsic dimension of a Hermitian
positive semi-definite matrix is bounded above by the sums of the intrinsic
dimensions of its diagonal blocks. 

\begin{example}\label{ex_intdim0}
 Let
\[ \ma = \bmat{\ma_{11} & \ma_{12} \\ \ma_{12}^* & \ma_{22} }\in\cnn\]
be Hermitian positive semi-definite, with
$\ma_{11}\in\complex^{k\times k}$ and $\ma_{22}\in\complex^{(n-k)\times (n-k)}$.
Then
\[  \intdim(\ma) \le \intdim(\ma_{11}) + \intdim(\ma_{22}).   \] 
This follows from the Hermitian positive semi-definiteness
of the principal submatrices
$\ma_{11}$ and $\ma_{22}$, the linearity of the trace, 
\begin{align*}
\trace(\ma) = \trace(\ma_{11}) + \trace(\ma_{22}),
\end{align*}
and
$\max\{\|\ma_{11}\|_2,\|\ma_{22}\|_2\} \leq \|\ma\|_2$.

The special case $\ma_{12}=\vzero$ is shown in 
\cite[Section 7.3.1]{Tropp2015}.
\end{example}

\subsection{Relation between intrinsic dimension and stable rank}\label{s_idsr}
The properties of the two-norm and Frobenius norm imply \cite[Section 7.2.2]{Tropp2015},
\begin{align*}
\intdim(\ma^*\ma)=\intdim(\ma\ma^*)=\sr(\ma).
\end{align*}
If $\ma\in\cnn$ is Hermitian positive semi-definite, then \cite[Section 6.5.4]{Tropp2015}
\begin{align*}
\intdim(\ma)=\sr(\ma^{1/2}),
\end{align*}
where $\ma^{1/2}$ is the Hermitian positive semi-definite
square root of $\ma$. 

More general relations are presented in Remark~\ref{r_idsr}.

\section{Rank inequalities}\label{s_rankineq}
We illustrate that the stable rank satisfies neither the rank inequality for
deleting a row or column (Section~\ref{s_delete}),
nor the rank-sum  (Section~\ref{s_ranksum})
nor the rank-product inequalities (Section~\ref{s_rankprod}), while the intrinsic dimension satisfies only two of the rank-sum inequalities.

\subsection{Deleting rows or columns from a matrix}\label{s_delete}
If a column or row is deleted from $\ma\in\cmn$, then the
rank of the resulting submatrix cannot exceed the rank
of the original matrix \cite[Section 0.4.5(b)]{HoJoI}.
In other words, if $\ma=\begin{bmatrix}\widehat{\ma} &\va \end{bmatrix}\in\cmn$ where $\va\in\cm$, then
\begin{align}\label{e_del}
\rank(\widehat{\ma})\leq \rank(\ma).
\end{align}

The stable rank and intrinsic dimension do, in general, not satisfy (\ref{e_del}).

\begin{example}\label{ex_sr3}
Deleting a column or row from a matrix can increase the stable
rank and the intrinsic dimension.

Let $n \ge 3$ and define
\begin{align*}
\ma=\begin{bmatrix} \mi_{n-1} & \\ & \alpha\end{bmatrix}
\in\rnn\qquad\text{with}\qquad \alpha>1.
\end{align*}

Deleting the trailing column from $\ma$ gives
\begin{align*}
\widehat{\ma} =\begin{bmatrix} \mi_{n-1}\\ \vzero_{1\times (n-1)}\end{bmatrix}\in\real^{n\times (n-1)}.
\end{align*}
The stable ranks are
\begin{align*}
\sr(\ma)=\frac{n-1+\alpha^2}{\alpha^2}=1+\frac{n-1}{\alpha^2},\qquad \sr(\widehat{\ma})=n-1.
\end{align*}
If $\alpha>\sqrt{\frac{n-1}{n-2}}$ then
\begin{align*}
\sr(\widehat{\ma})> \sr(\ma).
\end{align*}

Deleting the trailing row and column from $\ma$ gives $\widehat{\ma} =\mi_{n-1}$. The intrinsic dimensions are
\begin{align*}
\intdim(\ma)=\frac{n-1+\alpha}{\alpha}=1+\frac{n-1}{\alpha},\qquad \intdim(\widehat{\ma})=n-1.
\end{align*}
If $\alpha>\frac{n-1}{n-2}$ then
\begin{align*}
\intdim(\widehat{\ma})> \intdim(\ma).
\end{align*}
\end{example}

 \subsection{Rank of a sum}\label{s_ranksum}
If $\ma,\mb\in\cmn$ then the following hold
\cite[Section 0.4.5(d)]{HoJoI} 
\begin{enumerate}
\item {\rm Rank-sum inequality}
\begin{align}\label{e_sum1}
\rank(\ma+\mb)\leq \rank(\ma)+\rank(\mb).
\end{align}
\item {\rm Adding a matrix of rank~1.
} If $\rank(\mb)=1$, then
\begin{align}\label{e_sum3}
-1\leq \rank(\ma+\mb)-\rank(\ma)\leq 1.
\end{align}
\end{enumerate}

\subsubsection{Rank-sum inequality}\label{s_ranksumin}
\Ilse{In the context of DNNs with weight matrices of the same dimension, a lower bound on the 
generalization error (\ref{e_ge}) might be possible if the stable rank
satisfied the rank-sum inequality~(\ref{e_sum1}),
allowing one to bound $\sum_{\ell=1}^L{\sr(\mw^{(\ell)})}$ below by
$\sr\left(\sum_{\ell=1}^L{\mw^{(\ell)}}\right)$.
However, this is not possible.}

We illustrate that the stable rank, in general, does not 
satisfy the rank-sum inequality~(\ref{e_sum1}), but
the intrinsic dimension does.

\begin{example}
The stable rank does not satisfy the rank-sum inequality~(\ref{e_sum1}).

Let $\ma,\mb\in\rnn$ with $n\ge 4$ and
\begin{align*}
\ma=\begin{bmatrix} \alpha & \\ & 2\mi_{n-1}\end{bmatrix}, \qquad
\mb=\begin{bmatrix} -\alpha & \\ & -\mi_{n-1}\end{bmatrix},\qquad
\ma+\mb=\begin{bmatrix} 0 & \\ & \mi_{n-1}\end{bmatrix}
\end{align*}
where $|\alpha|>2$. The stable ranks are
\begin{align*}
\sr(\ma)=1+4\frac{n-1}{\alpha^2},\qquad \sr(\mb)=1 +\frac{n-1}{\alpha^2},\qquad
\sr(\ma+\mb)=n-1.
\end{align*}
If $\alpha^2>5 \frac{n-1}{n-3}$ then
\begin{align*}
\sr(\ma+\mb)\geq \sr(\ma)+\sr(\mb).
\end{align*}
\end{example}

The intrinsic dimension does satisfy the rank-sum inequality~(\ref{e_sum1}).

\begin{theorem}\label{thm:intdimsubadditive}
If $\ma,\mb\in\cnn$ are Hermitian positive semi-definite, then 
\begin{align*}
\intdim(\ma+\mb)\leq \intdim(\ma)+\intdim(\mb).
\end{align*}
\end{theorem}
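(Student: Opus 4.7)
The plan is to expand the definition of the intrinsic dimension on both sides and then use the linearity of the trace together with Weyl's monotonicity theorem, which is already recorded as~(\ref{e_weyl}).

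First I would write
\begin{align*}
\intdim(\ma+\mb) \;=\; \frac{\trace(\ma+\mb)}{\|\ma+\mb\|_2} \;=\; \frac{\trace(\ma)+\trace(\mb)}{\|\ma+\mb\|_2},
\end{align*}
using linearity of the trace in the second equality. The numerator has split cleanly; the work is all on the denominator.

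Next, I would apply~(\ref{e_weyl}) to the Hermitian positive semi-definite pair $\ma,\mb$ to obtain $\|\ma+\mb\|_2 \geq \|\ma\|_2$ and, by symmetry of the roles of $\ma$ and $\mb$, also $\|\ma+\mb\|_2 \geq \|\mb\|_2$. Because $\trace(\ma)\geq 0$ and $\trace(\mb)\geq 0$ (by positive semi-definiteness), this lets me bound each summand separately:
\begin{align*}
\frac{\trace(\ma)}{\|\ma+\mb\|_2} \;\leq\; \frac{\trace(\ma)}{\|\ma\|_2},\qquad
\frac{\trace(\mb)}{\|\ma+\mb\|_2} \;\leq\; \frac{\trace(\mb)}{\|\mb\|_2}.
\end{align*}
Adding the two inequalities and identifying the right-hand sides as $\intdim(\ma)$ and $\intdim(\mb)$ yields the desired bound.

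There is essentially no obstacle: the whole argument rests on the fact that enlarging the denominator by the addition of another positive semi-definite matrix cannot hurt, and the trace splits exactly. The only mild subtlety is confirming that $\|\ma\|_2$ and $\|\mb\|_2$ are non-zero so that the right-hand side is well defined, but this follows from the assumption that $\ma$ and $\mb$ are both non-zero Hermitian positive semi-definite.
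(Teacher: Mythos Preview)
Your proof is correct and follows essentially the same approach as the paper: split the numerator by linearity of the trace, then use Weyl's monotonicity theorem~(\ref{e_weyl}) to bound $\|\ma+\mb\|_2$ from below by $\|\ma\|_2$ and $\|\mb\|_2$ separately.
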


\begin{proof}
\Ilse{If either or both $\ma$ and $\mb$ are zero, then the bound holds with equality. Therefore, without loss of generality, assume both $\ma$ and $\mb$ are not zero. }
For the numerator of the intrinsic dimension, the linearity of the trace implies $\trace(\ma+\mb)=\trace(\ma)+\trace(\mb)$,
while Weyl's monotonicity theorem~(\ref{e_weyl})
 implies for the denominator
$\|\ma+\mb\|_2\geq\|\ma\|_2$ and
$\|\ma+\mb\|_2\geq \|\mb\|_2$.
\end{proof}

\subsubsection{Adding a matrix of rank~1}
The intrinsic dimension satisfies  the second
inequality in (\ref{e_sum3}), but not the first.

The
second inequality in (\ref{e_sum3}) implies that adding a 
Hermitian positive semi-definite matrix
of rank~1 can increase the intrinsic dimension by at most one.

\begin{theorem}\label{t_intdim9}
Let $\ma,\mb\in\cnn$ be Hermitian positive semi-definite.
If\\ $\rank(\mb)=1=\intdim(\mb)$ then 
\Ilse{\begin{align*}
1\leq \intdim(\ma+\mb)\leq \intdim(\ma)+1.
\end{align*}}
\end{theorem}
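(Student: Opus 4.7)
The plan is to reduce everything to the two-norm Weyl bound \eqref{e_weyl} together with the observation that a rank-one Hermitian positive semi-definite $\mb$ has $\trace(\mb)=\lambda_1(\mb)=\|\mb\|_2$, so the hypothesis $\intdim(\mb)=1$ is automatic and, more importantly, the single nonzero eigenvalue of $\mb$ lets us convert $\trace(\mb)$ into $\|\mb\|_2$ in the numerator.

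First I would expand the difference using linearity of the trace,
\begin{align*}
\intdim(\ma+\mb)-\intdim(\ma)
=\frac{\trace(\ma)+\trace(\mb)}{\|\ma+\mb\|_2}-\frac{\trace(\ma)}{\|\ma\|_2},
\end{align*}
and then replace $\trace(\mb)$ by $\|\mb\|_2$. Next I would split the right-hand side into two pieces,
\begin{align*}
\intdim(\ma+\mb)-\intdim(\ma)
=\trace(\ma)\left(\frac{1}{\|\ma+\mb\|_2}-\frac{1}{\|\ma\|_2}\right)+\frac{\|\mb\|_2}{\|\ma+\mb\|_2},
\end{align*}
so that the two inequalities I need to control become visibly separated.

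For the first piece, Weyl's monotonicity~\eqref{e_weyl} applied to $\ma+\mb$ yields $\|\ma+\mb\|_2\ge\|\ma\|_2$, so the bracketed expression is non-positive and, since $\trace(\ma)\ge 0$, the whole first summand is $\le 0$. For the second piece, the same inequality~\eqref{e_weyl} with the roles of $\ma$ and $\mb$ swapped gives $\|\ma+\mb\|_2\ge\|\mb\|_2$, so $\|\mb\|_2/\|\ma+\mb\|_2\le 1$. Adding the two bounds yields the claim.

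I do not anticipate a genuine obstacle: the argument is essentially two applications of \eqref{e_weyl}. The one conceptual point worth flagging is the identity $\trace(\mb)=\|\mb\|_2$, which is what makes the second summand above telescope to something bounded by $1$ rather than by $\intdim(\mb)$ in a weaker sense; without the rank-one hypothesis this step would fail, which is consistent with the fact that the lower bound $-1\le\intdim(\ma+\mb)-\intdim(\ma)$ from \eqref{e_sum3} is known to fail (as the surrounding discussion notes).
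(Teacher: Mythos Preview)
Your proof is correct and follows essentially the same route as the paper: both arguments expand $\intdim(\ma+\mb)-\intdim(\ma)$ via linearity of the trace and then apply the two Weyl bounds $\|\ma+\mb\|_2\ge\|\ma\|_2$ and $\|\ma+\mb\|_2\ge\|\mb\|_2$ from~\eqref{e_weyl}. The only cosmetic difference is that you substitute $\trace(\mb)=\|\mb\|_2$ up front and bound the two summands separately, whereas the paper leaves the second term as $\trace(\mb)/\|\ma+\mb\|_2\le\trace(\mb)/\|\mb\|_2=\intdim(\mb)=1$ and lets the $\trace(\ma)$ terms cancel; the underlying inequalities are identical.
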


\begin{proof}
\Ilse{Since $\ma$ and $\mb$ are Hermitian positive semi-definite, so is
$\ma+\mb$. In addition, $\rank(\mb)=1$ implies
$\rank(\ma+\mb)\geq \rank(\mb)=1$. Thus $\intdim(\ma+\mb)\geq 1$.
This establishes the lower bound.
}

\Ilse{If $\ma=\vzero$ then $\intdim(\ma)=0$ and 
$\intdim(\ma+\mb)=\intdim(\mb)=1$, so that the upper bound 
holds with equality.}

\Ilse{Now assume that $\ma\neq \vzero$.}
Applying Weyl's monotonicity theorem (\ref{e_weyl}),
$\|\ma+\mb\|_2\geq \|\ma\|_2$ and $\|\ma+\mb\|_2\geq \|\mb\|_2$ gives
\begin{align*}
\intdim(\ma+\mb)-\intdim(\ma)&=
\frac{\trace(\ma)}{\|\ma+\mb\|_2}+
\frac{\trace(\mb)}{\|\ma+\mb\|_2}-\frac{\trace(\ma)}{\|\ma\|_2}\\
&\leq\frac{\trace(\ma)}{\|\ma\|_2}+
\frac{\trace(\mb)}{\|\mb\|_2}-\frac{\trace(\ma)}{\|\ma\|_2}\\
&=\intdim(\mb)=1.
\end{align*}
\end{proof}

\Ilse{If $\ma=\vzero$, then the lower and upper bounds in 
Theorem~\ref{t_intdim9} hold with equality.}
The following example illustrates that
adding a Hermitian positive semi-definite matrix of 
rank~1 can 
lower the intrinsic dimension by more than one, \Ilse{and that 
for $\ma\neq \vzero$
the intrinsic dimension of the sum in Theorem~\ref{t_intdim9}
can be arbitrarily close to one.}

\begin{example}
The intrinsic dimension does not satisfy the first inequality in~(\ref{e_sum3}).

Let $\ma,\mb\in\rnn$ with $n \ge 4$ and
\begin{align*}
\ma=\begin{bmatrix} 0& \\ &\mi_{n-1}\end{bmatrix}, \qquad
\mb=\begin{bmatrix} \beta & \\ & \vzero_{(n-1)\times (n-1)}\end{bmatrix},
\qquad
\ma+\mb=\begin{bmatrix} \beta & \\ &\mi_{n-1}\end{bmatrix}.
\end{align*}
where $\beta>1$ and $\intdim(\mb)=1=\rank(\mb)$.

The intrinsic dimensions are
\begin{align*}
\intdim(\ma)=n-1, \qquad 
\intdim(\ma+\mb)=\frac{\beta + (n-1)}{\beta}=1+\frac{n-1}{\beta}.
\end{align*}
If $\beta>\frac{n-1}{n-3}$ then 
\Ilse{\begin{align*}
\intdim(\ma+\mb)< \intdim(\ma)-1.
\end{align*}
Furthermore, $\intdim(\ma+\mb)\rightarrow 1$ as 
$\beta\rightarrow\infty$.
}
\end{example}

\subsection{Rank of a product}\label{s_rankprod}
If $\ma\in\cmn$ and $\mb\in\complex^{n\times k}$, then \cite[Section 0.4.5(c)]{HoJoI} 
\begin{align}\label{e_prod1}
\rank(\ma\mb)\leq \min\{\rank(\ma), \rank(\mb)\}
\end{align}
We illustrate that the stable rank and intrinsic dimension, in general, do not
satisfy~(\ref{e_prod1}).

\begin{example}\label{ex_sr1}
The stable rank and intrinsic dimension do not satisfy (\ref{e_prod1}).

Let $\ma\in\rnn$ with
\begin{align*}
\ma=\begin{bmatrix} \mi_{n-1} & \\ & \alpha\end{bmatrix}, \qquad 
\mb=\begin{bmatrix} \mi_{n-1} & \\ &1/\alpha\end{bmatrix},\qquad
\ma\mb=\mi_n,
\end{align*}
where $\alpha>1$.
The stable ranks are
\begin{align*}
\sr(\ma)=\frac{n-1+\alpha^2}{\alpha^2}=1+\frac{n-1}{\alpha^2}, \qquad
\sr(\mb)=n-1+\frac{1}{\alpha^2},\qquad \sr(\ma\mb)=n.
\end{align*}
From $\alpha>1$ follows 
\begin{align*}
\sr(\ma\mb)>\sr(\ma),\qquad 
\sr(\ma\mb)> \sr(\mb).
\end{align*}
In summary,
\begin{align*}
\sr(\ma\mb)& >\max\{\sr(\ma),\sr(\mb)\}.
\end{align*}
The intrinsic dimensions are
\begin{align*}
\intdim(\ma)=\frac{n-1+\alpha}{\alpha}=1+\frac{n-1}{\alpha}, \quad
\intdim(\mb)=n-1+\frac{1}{\alpha},\quad \intdim(\ma\mb)=n.
\end{align*}
From $\alpha>1$ follows 
\begin{align*}
\intdim(\ma\mb)>\intdim(\ma),\qquad 
\intdim(\ma\mb)>\intdim(\mb).
\end{align*}
In summary,
\begin{align*}
\intdim(\ma\mb)&> \max\{\intdim(\ma),\intdim(\mb)\}.
\end{align*}
\end{example}

The stable rank satisfies (\ref{e_prod1}) in the special case where $\ma$ or $\mb$
is unitary, or real orthogonal. 
The intrinsic dimension satisfies (\ref{e_prod1}) in the special case where $\ma$ or $\mb$ equals $\alpha\mi_n$ with $\alpha>0$. 
\Ilse{In general, for $\intdim(\ma\mb)$ to be defined, 
the product $\ma\mb$
must also be Hermitian positive definite, which is the case when
 $\ma$ and $\mb$ are simultaneously diagonalizable.}

\section{Rank equalities}\label{s_rankeq}
We illustrate that the stable rank and intrinsic dimension satisfy neither the rank product equality
for non-singular matrices (Section~\ref{s_nonsing})
nor the cross product equality (Section~\ref{s_cross}).

\subsection{Multiplication by nonsingular matrices}\label{s_nonsing}
Multiplication by nonsingular matrices does not change the rank. That is, let $\ma\in\cmn$. If $\mb\in\cmm$ and $\mc\in\cnn$ are nonsingular, then \cite[Section 0.4.6(b)]{HoJoI}
\begin{align}\label{e_eq1}
\rank(\ma)=\rank(\mb\ma\mc).
\end{align}

The stable rank and intrinsic dimension do, in general, not satisfy (\ref{e_eq1}).
More specifically,
the next two examples illustrate how multiplication with a nonsingular matrix can make the stable rank as large as
or as small as possible.

\begin{example}\label{ex_eqsr1}
For any $\ma\in\cnn$ with $\rank(\ma)=r\geq 1$, 
there is a nonsingular matrix $\mb$ that increases the stable rank to the maximal amount $\sr(\ma\mb)=r$.

Let $\ma=\mU\diag\begin{pmatrix}\msig & \vzero_{(n-r)\times (n-r)}\end{pmatrix}\mv^*$ be a SVD where $\mU,\mv\in\cnn$ are unitary, and $\msig\in\real^{r\times r}$ 
is nonsingular
diagonal. Set
\begin{align*}
\mb=\mv\diag\begin{pmatrix}\msig^{-1} & \mi_{n-r}\end{pmatrix},\qquad
\ma\mb=\mU\diag\begin{pmatrix}\mi_r& \vzero_{(n-r)\times (n-r)}\end{pmatrix}.
\end{align*}
Then
$\sr(\ma)\leq \sr(\ma\mb)=r$.
\end{example}

The minimal stable rank of a non-zero matrix $\ma$ is
$\sr(\ma)=1=\rank(\ma)$. For matrices 
with $rank(\ma)\geq 2$, a matrix multiplication
can bring the stable
rank as close to~1 as possible.

\begin{example}\label{ex_eqsr2}
For any $\ma\in\cnn$ with $\rank(\ma)\geq 2$, 
there is a nonsingular matrix $\mb$ that decreases the stable rank arbitrarily close to~1.

Let 
\begin{align*}
\ma=\mU\diag\begin{pmatrix}\sigma_1&\cdots &\sigma_r & 0&
\cdots &0\end{pmatrix}\mv^* \qquad \text{with}\quad
\sigma_1\geq \cdots \geq\sigma_r>0
\end{align*}
be a SVD where $\mU,\mv\in\cnn$ are unitary.
Pick some $0<\alpha<1$ and set
\begin{align*}
\mb&=\mv\diag\begin{pmatrix}\frac{1}{\sigma_1}& \frac{\alpha}{\sigma_2} & \cdots&
\frac{\alpha}{\sigma_r} & 1 &\cdots &1\end{pmatrix}\\
\ma\mb&=\mU\diag\begin{pmatrix}1 & \alpha& \cdots &\alpha& 
0&\cdots &0\end{pmatrix}.
\end{align*}
Then
\begin{align*}
\sr(\ma\mb)=1+(r-1)\alpha^2\rightarrow 1\qquad \text{as}\qquad \alpha\rightarrow 0.
\end{align*}
\end{example}

\begin{remark}
The stable rank is unitarily invariant. That is, if 
$\ma\in\cmn$, and $\mU\in\cmm$ and $\mv\in\cnn$ are unitary or real orthogonal, then
\begin{align*}
\sr(\ma)=\sr(\mU\ma\mv).
\end{align*}
\end{remark}

Analogous to Examples~\ref{ex_eqsr1} and~\ref{ex_eqsr2},
we 
illustrate below how a congruence transformation can make the intrinsic dimension as large or as
small as possible.

\begin{example}\label{ex_eqid1}
For any Hermitian positive semi-definite matrix $\ma\in\cnn$ with $\rank(\ma)=r\geq 1$, 
there is a  congruence transformation with a 
nonsingular matrix~$\mb$
that increases the intrinsic dimension to the maximal amount $\intdim(\mb^*\ma\mb)=r$.

Let $\ma=\mv\diag\begin{pmatrix}\msig & \vzero_{(n-r)\times (n-r)}\end{pmatrix}\mv^*$ be an eigenvalue
decomposition where $\mv\in\cnn$ is unitary, and $\msig\in\real^{r\times r}$ 
is nonsingular
diagonal. Set
\begin{align*}
\mb=\mv\diag\begin{pmatrix}\msig^{-1/2} & \mi_{n-r}\end{pmatrix}\mv^*,\qquad
\mb^*\ma\mb=\mv\diag\begin{pmatrix}\mi_r& \vzero_{(n-r)\times (n-r)}\end{pmatrix}\mv^*.
\end{align*}
Then
$\intdim(\ma)\leq \intdim(\mb^*\ma\mb)=r$.
\end{example}

Like the stable rank,
the minimal intrinsic dimension of a non-zero 
Hermitian positive semi-definite matrix $\ma$ is
$\intdim(\ma)=1=\rank(\ma)$. For matrices 
with $rank(\ma)\geq 2$, a congruence transformation
can bring the intrinsic dimension
as close to~1 as possible.

\begin{example}\label{ex_eqid2}
For any Hermitian positive semi-definite matrix
$\ma\in\cnn$ with $\rank(\ma)\geq 2$, 
there is a congruence transformation that decreases the 
intrinsic dimension arbitrarily close to~1.

Let 
\begin{align*}
\ma=\mv\diag\begin{pmatrix}\sigma_1&\cdots &\sigma_r & 0&
\cdots &0\end{pmatrix}\mv^* \qquad \text{with}\quad
\sigma_1\geq \cdots \geq\sigma_r>0
\end{align*}
be an eigenvalue decomposition where $\mv\in\cnn$ is unitary.
Pick some $0<\alpha<1$ and set
\begin{align*}
\mb&=\mv\diag\begin{pmatrix}\sqrt{\frac{1}{\sigma_1}}& \sqrt{\frac{\alpha}{\sigma_2}} & \cdots&
\sqrt{\frac{\alpha}{\sigma_r}} & 1 &\cdots &1\end{pmatrix}\\
\mb^*\ma\mb&=\mv\diag\begin{pmatrix}1 & \alpha& \cdots &\alpha& 
0&\cdots &0\end{pmatrix}\mv^*.
\end{align*}
Then
\begin{align*}
\intdim(\mb^*\ma\mb)=1+(r-1)\alpha\rightarrow 1\qquad \text{as}\qquad \alpha\rightarrow 0.
\end{align*}
\end{example}

\begin{remark}\label{r_eqid1}
The intrinsic dimension is invariant under unitary (or real orthogonal) similarity transformations. That is, if $\ma\in\cnn$ is Hermitian positive semi-definite, and $\mU\in\cnn$ is unitary or real orthogonal matrix, then
\begin{align*}
\intdim(\ma)=\intdim(\mU^*\ma\mU).
\end{align*}
\end{remark}

\subsection{Rank of a cross product matrix}\label{s_cross}
The rank of a cross product matrix is equal to the rank of the original matrix. That is, if $\ma\in\cmn$ then
\cite[Section~0.4.6.(d)]{HoJoI}
\begin{align}\label{e_eq2}
\rank(\ma^*\ma)=\rank(\ma).
\end{align}

Neither the stable rank nor the intrinsic dimension satisfy~(\ref{e_eq2}). They are bounded by but
not equal to the stable rank or intrinsic dimension of the cross product matrix.

\begin{theorem}\label{t_eq1}
If $\ma\in\cmn$ is non-zero, then
\begin{align*}
\sr(\ma^*\ma)\leq \sr(\ma)\qquad
\text{and}\qquad \sr(\ma\ma^*)\leq \sr(\ma).
\end{align*}
If $\ma\in\cnn$ is non-zero Hermitian positive semi-definite, then
\begin{align*}
\intdim(\ma^*\ma)=\sr(\ma)\leq \intdim(\ma).
\end{align*}
\end{theorem}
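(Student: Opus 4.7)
The plan has three pieces, corresponding to the three assertions.

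First, for $\sr(\ma^*\ma)\leq \sr(\ma)$, I would rewrite everything in terms of the singular values of $\ma$. Since the singular values of $\ma^*\ma$ are $\sigma_i(\ma)^2$, we have $\|\ma^*\ma\|_F^2=\sum_i \sigma_i(\ma)^4$ and $\|\ma^*\ma\|_2^2=\sigma_1(\ma)^4$, so
\begin{align*}
\sr(\ma^*\ma)=\sum_{i=1}^{\min\{m,n\}}\!\!\frac{\sigma_i(\ma)^4}{\sigma_1(\ma)^4},\qquad \sr(\ma)=\sum_{i=1}^{\min\{m,n\}}\!\!\frac{\sigma_i(\ma)^2}{\sigma_1(\ma)^2}.
\end{align*}
Setting $x_i\equiv \sigma_i(\ma)^2/\sigma_1(\ma)^2\in[0,1]$, the desired inequality reduces to the term-by-term comparison $x_i^2\leq x_i$, which holds since $x_i\in[0,1]$. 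The bound $\sr(\ma\ma^*)\leq\sr(\ma)$ follows by the identical argument, because $\ma\ma^*$ has the same nonzero singular values as $\ma^*\ma$ (equivalently, apply the first bound to $\ma^*$ and note that $\sr(\ma^*)=\sr(\ma)$ from $\|\ma^*\|_F=\|\ma\|_F$ and $\|\ma^*\|_2=\|\ma\|_2$).

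Second, the equality $\intdim(\ma^*\ma)=\sr(\ma)$ is essentially a definition-chase already noted in Section~\ref{s_idsr}: $\trace(\ma^*\ma)=\|\ma\|_F^2$ and $\|\ma^*\ma\|_2=\|\ma\|_2^2$, so
\begin{align*}
\intdim(\ma^*\ma)=\frac{\trace(\ma^*\ma)}{\|\ma^*\ma\|_2}=\frac{\|\ma\|_F^2}{\|\ma\|_2^2}=\sr(\ma).
\end{align*}

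Third, for $\sr(\ma)\leq \intdim(\ma)$ when $\ma$ is Hermitian positive semi-definite, I would use that the singular values coincide with the (nonnegative) eigenvalues $\lambda_i(\ma)$ so that
\begin{align*}
\sr(\ma)=\sum_{i=1}^{n}\frac{\lambda_i(\ma)^2}{\lambda_1(\ma)^2},\qquad \intdim(\ma)=\sum_{i=1}^n \frac{\lambda_i(\ma)}{\lambda_1(\ma)}.
\end{align*}
Setting $y_i\equiv \lambda_i(\ma)/\lambda_1(\ma)\in[0,1]$ (here positive semi-definiteness is what guarantees $y_i\geq 0$), the inequality follows again from $y_i^2\leq y_i$.

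No real obstacle arises; the only subtlety is keeping track of why the normalized quantities lie in $[0,1]$ (which requires nonnegativity of the eigenvalues in the Hermitian positive semi-definite step, and nonnegativity of squared singular values automatically in the first step).
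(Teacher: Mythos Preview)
Your proof is correct. The route is close to the paper's but slightly more hands-on: the paper obtains $\sr(\ma^*\ma)\leq\sr(\ma)$ from the strong submultiplicativity $\|\ma^*\ma\|_F\leq\|\ma\|_2\|\ma\|_F$ together with $\|\ma^*\ma\|_2=\|\ma\|_2^2$, and obtains $\sr(\ma)\leq\intdim(\ma)$ from the trace bound $\trace(\ma^2)\leq\|\ma\|_2\trace(\ma)$; you instead unpack both inequalities into the term-by-term comparison $x_i^2\leq x_i$ for $x_i\in[0,1]$. The mathematical content is identical---the paper's norm and trace inequalities are exactly the aggregate form of your termwise bound---so the only difference is packaging: the paper invokes named inequalities while you verify the relevant special cases directly, which is more explicit but also slightly less general (the paper's argument extends verbatim to the $p$-stable rank in Theorem~\ref{t_eq7}).
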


\begin{proof}
The inequalities for the stable rank follow from the 
strong submultiplicativity $\|\ma^*\ma\|_F\leq
\|\ma\|_2\|\ma\|_F$, and 
$\|\ma^*\ma\|_2=\|\ma\|_2^2=\|\ma\ma^*\|_2$.

The equality for the intrinsic dimension follows
from Section~\ref{s_idsr}, while the inequality 
follows from $\trace(\ma^*\ma)=\trace(\ma^2)\leq \|\ma\|_2\trace(\ma)$.
\end{proof}

\begin{example}
The stable rank and intrinsic dimension do not satisfy (\ref{e_eq2}), that is, the stable rank and intrinsic
dimension of a cross product matrix
can be strictly smaller than those of the matrix.

Let $\ma\in\rnn$ with
\begin{align*}
\ma=\begin{bmatrix} 1 & \\ & \alpha \mi_{n-1}\end{bmatrix}
\qquad \text{and}\qquad
\ma^*\ma =\ma^2=\begin{bmatrix}1 & \\ & \alpha^2\mi_{n-1}
\end{bmatrix}
\end{align*}
where $0<\alpha<1$.

The stable ranks are
\begin{align*}
\sr(\ma)=1+(n-1)\alpha^2\qquad \text{and}\qquad 
\sr(\ma^*\ma)=1+(n-1)\alpha^4.
\end{align*}
From $\alpha<1$ follows
\begin{align*}
sr(\ma^*\ma)<\sr(\ma).
\end{align*}
Similarly, since $\ma$ is Hermitian positive definite,
the intrinsic dimensions are
\begin{align*}
\intdim(\ma)=1+(n-1)\alpha\qquad \text{and}\qquad 
\intdim(\ma^*\ma)=1+(n-1)\alpha^2.
\end{align*}
From $\alpha<1$ follows
\begin{align*}
\intdim(\ma^*\ma)<\intdim(\ma).
\end{align*}
\end{example}

\begin{remark}
The intrinsic dimension
of a Cholesky factor or square root can exceed that
of the original matrix. 
That is, if $\ma\in\cnn$
is Hermitian positive semi-definite, with pivoted
Cholesky factorization $\mP^*\ma\mP=\ml\ml^*$ where $\mP\in\rnn$ is a permutation matrix and $\ml\in\cnn$, then
Theorem~\ref{t_eq1} and Remark~\ref{r_eqid1} imply
\begin{align*}
\intdim(\ma)\leq \intdim(\ml).
\end{align*}
\end{remark}

Theorem~\ref{t_eq1} is generalized in Theorem~\ref{t_eq7}.
\section{Extension to p-stable ranks}\label{s_ext}
We review
Schatten $p$-norms (Section~\ref{s_schatten}), and
extend the notion of stable rank and intrinsic dimension to $p$-stable ranks in 
Schatten $p$-norms (Section~\ref{s_pstable}),
thereby unifying the stable rank and intrinsic dimension:
The stable rank is the 2-stable rank, while the intrinsic
dimension is the 1-stable rank of a Hermitian positive
semi-definite matrix.
Subsequently, we  derive 
sum inequalities for the $p$th root of the $p$-stable 
rank (Section~\ref{s_psum}) and
product inequalities 
for the $p$-stable rank (Section~\ref{s_pproduct}),
and show that the $p$th root of the $p$-stable rank
is well-conditioned in the
normwise absolute sense (Section~\ref{s_pcond}).

\subsection{Review of Schatten p-norms}\label{s_schatten} 
The $p$-stable rank is defined in terms of Schatten $p$-norms
which, in turn, are defined on the singular values of real and complex matrices, and thus special cases of symmetric 
gauge functions \cite[Chapter IV]{Bhatia97}, 
\cite[Section 1]{HCH2020}, \cite[Sections 3.4-3.5]{HoJoII}, \cite{RJ2010}.

\begin{definition}\label{d_norms}
Let $\ma\in\cmn$ have singular values $\sigma_1(\ma)\geq \cdots \geq \sigma_r(\ma)$,
where $r\equiv\min\{m,n\}$.
For $1\leq p\leq \infty$, the {\rm Schatten  $p$-norms} are defined as
$$\spn{\ma}\ \equiv \ \sqrt[p]{\sigma_1(\ma)^p+\cdots+\sigma_r(\ma)^p}.$$
\end{definition}
The following are popular Schatten $p$-norms:
\begin{description}
\item[$\qquad p=1:$\  ] Nuclear (trace) norm
$\ \|\ma\|_* \ =\ \sum_{j=1}^r{\sigma_j(\ma)}\ = \son{\ma}$.
\item[$\qquad p=2:$\ ] Frobenius norm
$\ \|\ma\|_F \ =\ \sqrt{\sum_{j=1}^r{\sigma_j(\ma)^2}}\ = \  \stn{\ma}$.
\item[$\qquad p=\infty:$\ ] Two (operator) norm
$\ \|\ma\|_2\ =\ \sigma_1(\ma)\ = \ \sinf{\ma}$.
\end{description}
\bigskip

The Schatten $p$-norms satisfy various properties,
including monotonicity,
submultiplicativity, strong submultiplicativity,
and unitary invariance (symmetric norm), as shown below.

\begin{lemma}\label{l_pnorm}
Let $\ma\in\cmn$ with $r\equiv\rank(\ma)\leq \min\{m,n\}$, then
\begin{align}\label{e_rank}
\spn{\ma}\leq \sqrt[p]{r}\|\ma\|_2.
\end{align}
If $1\leq p\leq q\leq\infty$  then
\begin{align}\label{e_mono}
\son{\ma}\leq\spn{\ma}\leq \spnq{\ma}\leq \sinf{\ma}.
\end{align}
If $\mb\in\complex^{n\times \ell}$, and $\mc\in\complex^{s\times m}$, then
\begin{align}\label{e_multp}
\spn{\ma\mb} &\leq \spn{\ma} \spn{\mb}
\end{align}
and
\begin{align}\label{e_ssm}
\spn{\mc\ma\mb}&\leq \sigma_1(\mc)\,\sigma_1(\mb)\,\spn{\ma} = \|\mc\|_2\,\|\mb\|_2\,\spn{\ma}.
\end{align}
If $\mq_1\in\complex^{s\times m}$ with $\mq_1^T\mq_1=\mi_m$ and
$\mq_2\in\complex^{\ell \times n}$ with $\mq_2^T\mq_2=\mi_n$, then
\begin{align}\label{e_uni}
\spn{\mq_1\ma\mq_2^T} = \spn{\ma}.
\end{align}
\end{lemma}

\subsection{The p-stable rank}\label{s_pstable}
We define the $p$-stable rank and discuss its basic
properties.

\begin{definition}\label{d_pstable}
For $1\leq p\leq \infty$, the {\rm $p$-stable 
rank} of a non-zero matrix $\ma\in\cmn$ is
\begin{align*}
\sr_p(\ma)\equiv \spn{\ma}^p/\sinf{\ma}^p=\spn{\ma}^p/\|\ma\|_2^p.
\end{align*}
The $p$-stable rank of the zero matrix is $\sr_p(\vzero_{m\times n})=0$.
\end{definition}

\Ilse{In analogy to~(\ref{e_ftnorm}),
the relation (\ref{e_rank}) between Schatten
$p$-norms and two-norm can be improved to
\begin{align}
\spn{\ma}&\leq \sqrt[p]{\sr_p(\ma)}\>\sinf{\ma}=
\sqrt[p]{\sr_p(\ma)}\>\|\ma\|_2.
\end{align}
Extending the submultiplicative property~(\ref{e_ftnorm2}) to general Schatten 
$p$-norms gives
\begin{align}\label{e_multp2p}
\spn{\ma\mb} &\leq
\sqrt[p]{\min\{\sr_p(\ma),\sr_p(\mb),\sr_p(\ma\mb)\}}\>
\sinf{\ma}\sinf{\mb}\\ 
&=\sqrt[p]{\min\{\sr_p(\ma),\sr_p(\mb),\sr_p(\ma\mb)\}}\>
\|\ma\|_2\|\mb\|_2. 
\end{align}
Note the difference to (\ref{e_multp}), which bounds a Schatten $p$-norm
in terms of the very same Schatten $p$-norm, rather than the two-norm.
}

\begin{remark}\label{r_unify}
Definition~\ref{d_pstable} unifies the concepts of stable
rank and intrinsic dimension:
The 2-stable rank of a general matrix $\ma\in\cmn$
is the ordinary stable rank,
\begin{align*}
\sr_2(\ma)=\sr(\ma),
\end{align*}
while the 1-stable rank of a 
Hermitian positive semi-definite matrix
$\ma\in\cnn$ is the intrinsic dimension,
\begin{align*}
\sr_1(\ma)=\intdim(\ma).
\end{align*}
For $p=\infty$, we have 
\begin{align*}\sr_{\infty}(\ma)=1.
\end{align*}
\end{remark}

The examples for the stable rank from Example~\ref{ex_sr2}
also hold for the $p$-stable rank.

\begin{example}
The following are special cases where $\sr_p(\ma)=\rank(\ma)$.
\begin{itemize}
\item If $\rank(\ma)=1$ then $\sr_p(\ma)=1$. 
\item If $\ma=\alpha\mU$ for some $\alpha\neq 0$
and unitary, or real orthogonal $\mU\in\cnn$, then $\sr_p(\ma)=n=\rank(\ma)$.
\item If $\ma\in\cmn$ with $\rank(\ma)=r\geq 1$
has singular values $\sigma_1(\ma)=\cdots=\sigma_r(\ma)>0$,
then $\sr_p(\ma)=r=\rank(\ma)$.
\item If $\ma$ is an orthogonal projector, then $\sr_p(\ma) = \spn{\ma}^p = \rank(\ma)$.
\end{itemize}
\end{example}

\begin{remark}
The $p$-stable rank is unitarily invariant. That is, if 
$\ma\in\cmn$, and $\mU\in\cmm$ and $\mv\in\cnn$ are unitary or real orthogonal, then (\ref{e_uni}) implies
\begin{align*}
\sr_p(\ma)=\sr_p(\mU\ma\mv).
\end{align*}
The stable rank is also scale invariant, which implies if $\alpha \in \mathbb{C}$ is nonzero, then 
\[ \sr_p(\alpha\ma) = \sr_p(\ma).\]
\end{remark}

The following generalizes the stable rank relation from 
Section~\ref{s_idsr} between a matrix and its cross product matrix.

\begin{remark}\label{r_idsr}
Let $\ma\in\cmn$. Then
\begin{align*}
\sr_p(\ma^*\ma)=\sr_{2p}(\ma)=\sr_p(\ma\ma^*).
\end{align*}
If $\ma\in\cnn$ is Hermitian positive semi-definite, then
\begin{align*}
\sr_p(\ma)=\sr_{2p}(\ma^{1/2}).
\end{align*}
\end{remark}

The $p$-stable rank inherits the monotonicity 
(\ref{e_mono}) of the Schatten $p$-norms.

\begin{remark}[Relation between $p$-stable ranks]
Let $\ma \in \cmn$ be a nonzero matrix with $\rank(\ma) = r$. If $1 \le q \le p \le \infty$, 
then
\[ 1 \le \sr_{p}(\ma) \le \sr_q(\ma)\le r.\]

This follows from $\mb\equiv \ma/\|\ma\|_2$ having the largest singular equal to 1 and (\ref{e_mono}).
\end{remark}

\begin{remark}[$p$-stable ranks for $0\leq p\leq 1$]
Although the functions in Definition~\ref{d_norms}
are only quasi-norms for $0 \le p < 1$ rather than norms, because they do not satisfy the triangle
inequality \cite[Problem IV.5.1]{Bhatia97}, we can extend Definition~\ref{d_pstable}  to include this range. Indeed, for $p = 0$, we have $\sr_0(\ma) = r$.

Our subsequent results, however, assume that $p\geq 1$.
\end{remark}

\subsection{Sum inequalities for the pth root of the p-stable rank}\label{s_psum}
We show that
the $p$th root of the $p$-stable rank satisfies the
rank-sum inequality~(\ref{e_sum1}), provided both matrices are non-zero Hermitian positive semi-definite.

\begin{theorem}\label{t_sr8}
Let $\ma,\mb\in\cnn$ be non-zero Hermitian positive semi-definite. Then
\begin{align*}
\sqrt[p]{\sr_p(\ma+\mb)}\leq \sqrt[p]{\sr_p(\ma)}+\sqrt[p]{\sr_p(\mb)}.
\end{align*}
\end{theorem}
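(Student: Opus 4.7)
The plan is to unfold the definition of $\sqrt[p]{\sr_p(\cdot)}$, apply the Schatten $p$-norm triangle inequality to the numerator, and use Weyl's monotonicity theorem to bound the denominator.

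First, I would rewrite the target inequality in the equivalent, cleaner form
\[
\frac{\spn{\ma+\mb}}{\|\ma+\mb\|_2} \ \leq\ \frac{\spn{\ma}}{\|\ma\|_2} + \frac{\spn{\mb}}{\|\mb\|_2},
\]
which is just $\sqrt[p]{\sr_p(\cdot)} = \spn{\cdot}/\|\cdot\|_2$ applied to Definition~\ref{d_pstable}. This is the natural quantity to bound, because neither side contains $p$th powers that would force us to invoke something like Minkowski's inequality twice.

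Next, for the numerator, I would invoke the triangle inequality for the Schatten $p$-norm (it is a norm for $p\geq 1$, as noted in Section~\ref{s_schatten}) to get $\spn{\ma+\mb} \leq \spn{\ma} + \spn{\mb}$. For the denominator, the Hermitian positive semi-definiteness of $\ma$ and $\mb$ allows me to use Weyl's monotonicity inequality \eqref{e_weyl}, which gives both $\|\ma+\mb\|_2 \geq \|\ma\|_2$ and $\|\ma+\mb\|_2 \geq \|\mb\|_2$. Combining the two ingredients yields
\[
\frac{\spn{\ma+\mb}}{\|\ma+\mb\|_2}\ \leq\ \frac{\spn{\ma}}{\|\ma+\mb\|_2} + \frac{\spn{\mb}}{\|\ma+\mb\|_2}\ \leq\ \frac{\spn{\ma}}{\|\ma\|_2} + \frac{\spn{\mb}}{\|\mb\|_2},
\]
which is the desired statement.

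There is no real obstacle here: the proof uses exactly the two facts that distinguish the sum-subadditivity of intrinsic dimension (Theorem~\ref{thm:intdimsubadditive}) from the failure of subadditivity for the general (squared) stable rank, namely (i) the Schatten $p$-norm satisfies the triangle inequality, and (ii) the two-norm of a sum of HPSD matrices dominates the two-norm of each summand. The Hermitian positive semi-definiteness hypothesis is used only in step (ii); without it, Weyl's monotonicity can fail (e.g., with opposite-signed matrices), and this explains why the rank-sum inequality for the $p$th root of the $p$-stable rank is stated only under that hypothesis.
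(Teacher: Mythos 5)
Your proposal is correct and follows exactly the paper's own argument: expand $\sqrt[p]{\sr_p(\cdot)}=\spn{\cdot}/\|\cdot\|_2$, apply the Schatten $p$-norm triangle inequality to the numerator, and use Weyl's monotonicity \eqref{e_weyl} to replace $\|\ma+\mb\|_2$ by $\|\ma\|_2$ and $\|\mb\|_2$ in the denominators. Nothing to add.
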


\begin{proof}
Applying Weyl's monotonicity theorem (\ref{e_weyl}),
$\|\ma+\mb\|_2\geq \|\ma\|_2$ and
$\|\ma+\mb\|_2\geq \|\mb\|_2$,
and the triangle inequality
to the $p$th root of the $p$-stable rank gives
\begin{align*}
\sqrt[p]{\sr_p(\ma+\mb)}\leq\frac{\spn{\ma}}{\|\ma+\mb\|_2}
+\frac{\spn{\mb}}{\|\ma+\mb\|_2}\leq 
\sqrt[p]{\sr_p(\ma)}+\sqrt[p]{\sr_p(\mb)}.
\end{align*}
\end{proof}

In particular,
the square root of the 2-stable rank satisfies the
rank-sum inequality~(\ref{e_sum1}), provided both matrices are non-zero Hermitian positive semi-definite.

\begin{corollary}\label{c_sr8}
Let $\ma,\mb\in\cnn$ be non-zero Hermitian positive semi-definite. Then
\begin{align*}
\sqrt{\sr(\ma+\mb)}\leq \sqrt{\sr(\ma)}+\sqrt{\sr(\mb)}.
\end{align*}
\end{corollary}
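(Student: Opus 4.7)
The plan is to derive Corollary \ref{c_sr8} as an immediate specialization of Theorem \ref{t_sr8}, since the ordinary stable rank is exactly the $2$-stable rank. The heavy lifting (Weyl's monotonicity and the triangle inequality for the Frobenius norm) has already been packaged into Theorem \ref{t_sr8}, so no new ingredients are required.

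Concretely, I would first invoke Remark \ref{r_unify} to write $\sr(\ma)=\sr_2(\ma)$, $\sr(\mb)=\sr_2(\mb)$, and $\sr(\ma+\mb)=\sr_2(\ma+\mb)$. Since $\ma$ and $\mb$ are non-zero Hermitian positive semi-definite, so is $\ma+\mb$, and all three stable ranks are well-defined and positive. Then I would apply Theorem \ref{t_sr8} with $p=2$ to obtain
\begin{align*}
\sqrt{\sr(\ma+\mb)}=\sqrt[2]{\sr_2(\ma+\mb)}\leq \sqrt[2]{\sr_2(\ma)}+\sqrt[2]{\sr_2(\mb)}=\sqrt{\sr(\ma)}+\sqrt{\sr(\mb)}.
\end{align*}

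There is really no main obstacle here: the corollary is a one-line specialization. The only thing to double-check is that the hypotheses of Theorem \ref{t_sr8} (non-zero and Hermitian positive semi-definite) are inherited, which they are by assumption, and that the $p=2$ Schatten norm is indeed the Frobenius norm so that $\spn[2]{\ma}^2/\|\ma\|_2^2=\|\ma\|_F^2/\|\ma\|_2^2=\sr(\ma)$ — this is recorded in the list following Definition~\ref{d_norms}.
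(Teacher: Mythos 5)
Your proof is correct and matches the paper's (implicit) argument exactly: the corollary is the $p=2$ case of Theorem~\ref{t_sr8}, using the identification $\sr=\sr_2$ from Remark~\ref{r_unify}. Nothing further is needed.
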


Theorem~\ref{thm:intdimsubadditive} shows
that the 1-stable rank satisfies the
rank-sum inequality~(\ref{e_sum1}) for Hermitian positive semi-definite matrices.

\subsubsection{Adding a matrix of rank~1}
The $p$th root of the $p$-stable rank satisfies the second
inequality in (\ref{e_sum3}) for Hermitian
positive semi-definite matrices.
This means, adding a Hermitian positive semi-definite matrix of rank~1 increases the $p$th root of the 
$p$-stable rank by at most one.

\begin{theorem}\label{t_sr9}
Let $\ma,\mb\in\cnn$ be Hermitian positive semi-definite.
If $\rank(\mb)=1$ then
\Ilse{\begin{align*}
1\leq \sqrt[p]{\sr_p(\ma+\mb)}\leq 1+\sqrt[p]{\sr_p(\ma)}.
\end{align*}}
\end{theorem}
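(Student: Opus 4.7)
The plan is to follow the template of the proof of Theorem~\ref{t_intdim9} (the $p=1$ case), splitting the numerator via the triangle inequality for the Schatten $p$-norm and controlling the denominator by Weyl's monotonicity theorem, with the rank-$1$ hypothesis entering at the very end through~(\ref{e_rank}).

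First I would rewrite
\[
\sqrt[p]{\sr_p(\ma+\mb)} = \frac{\spn{\ma+\mb}}{\|\ma+\mb\|_2}
\]
and similarly for $\ma$. For the numerator, since $1 \le p \le \infty$, the Schatten $p$-norm is a genuine norm (by Lemma~\ref{l_pnorm} or Definition~\ref{d_norms}), so
\[
\spn{\ma+\mb} \le \spn{\ma} + \spn{\mb}.
\]
For the denominator, because $\ma$ and $\mb$ are Hermitian positive semi-definite, Weyl's monotonicity theorem~(\ref{e_weyl}) gives both $\|\ma+\mb\|_2 \ge \|\ma\|_2$ and $\|\ma+\mb\|_2 \ge \|\mb\|_2$.

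Combining these two ingredients term by term yields
\[
\sqrt[p]{\sr_p(\ma+\mb)} - \sqrt[p]{\sr_p(\ma)}
\;\le\; \frac{\spn{\ma}}{\|\ma+\mb\|_2} + \frac{\spn{\mb}}{\|\ma+\mb\|_2} - \frac{\spn{\ma}}{\|\ma\|_2}
\;\le\; \frac{\spn{\ma}}{\|\ma\|_2} + \frac{\spn{\mb}}{\|\mb\|_2} - \frac{\spn{\ma}}{\|\ma\|_2} = \sqrt[p]{\sr_p(\mb)}.
\]
Finally, the rank-$1$ assumption on $\mb$ together with~(\ref{e_rank}) gives $\spn{\mb} \le \sqrt[p]{1}\,\|\mb\|_2 = \|\mb\|_2$, so $\sr_p(\mb) = 1$ and $\sqrt[p]{\sr_p(\mb)} = 1$. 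The asserted inequality follows.

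No step is genuinely hard here: the only things to get right are that the triangle inequality is legitimate for $p \ge 1$ (the remark on quasi-norms for $p < 1$ is precisely why the hypothesis $p \ge 1$ is needed), and that the Hermitian positive semi-definiteness of both $\ma$ and $\mb$ is what lets us invoke~(\ref{e_weyl}) to bound $\|\ma+\mb\|_2$ from below by each summand's two-norm. Note that, in contrast to Theorem~\ref{t_intdim9}, the statement does not require $\intdim(\mb) = 1$ separately because $\spn{\mb} = \|\mb\|_2$ automatically whenever $\rank(\mb) = 1$.
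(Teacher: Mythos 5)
Your proof is correct and follows essentially the same route as the paper's: triangle inequality for the Schatten $p$-norm on the numerator, Weyl's monotonicity theorem~(\ref{e_weyl}) on the denominator, and the observation that $\rank(\mb)=1$ forces $\sr_p(\mb)=1$. Your added remarks on why $p\ge 1$ is needed for the triangle inequality and why $\spn{\mb}=\|\mb\|_2$ for rank-one $\mb$ only make explicit what the paper leaves implicit.
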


\begin{proof}
\Ilse{The proof is similar to that of Theorem~\ref{t_intdim9}.}

\Ilse{Since $\ma$ and $\mb$ are Hermitian positive semidefinite, so is
$\ma+\mb$. In addition, $\rank(\mb)=1$ implies
$\rank(\ma+\mb)\geq \rank(\mb)=1$. Thus $\sr_p(\ma+\mb)\geq 1$.
This establishes the lower bound.}

\Ilse{If $\ma=\vzero$ then $\sr_p(\ma)=0$ and 
$\sr_p(\ma+\mb)=\sr_p(\mb)=1$, so that the upper bound 
holds with equality.}

\Ilse{Now assume that $\ma\neq \vzero$.}
Applying Weyl's monotonicity theorem (\ref{e_weyl}),
$\|\ma+\mb\|_2\geq \|\ma\|_2$ and
$\|\ma+\mb\|_2\geq \|\mb\|_2$
gives
\begin{align*}
\sqrt[p]{\sr_p(\ma+\mb)}-\sqrt[p]{\sr_p(\ma)}&\leq
\frac{\spn{\ma}}{\|\ma+\mb\|_2}+
\frac{\spn{\mb}}{\|\ma+\mb\|_2}-
\frac{\spn{\ma}}{\|\ma\|_2}\\
&\leq\frac{\spn{\ma}}{\|\ma\|_2}+
\frac{\spn{\mb}}{\|\mb\|_2}-\frac{\spn{\ma}}{\|\ma\|_2}\\
&=\sqrt[p]{\sr_p(\mb)}=1.
\end{align*}
The last equality follows from 
the assumption $1=\rank(\mb)=\sr_p(\mb)$.
\end{proof}

\Ilse{If $\ma=\vzero$, then the lower and upper bounds in 
Theorem~\ref{t_sr9} hold with equality. The following example illustrates that 
for $\ma\neq\vzero$ the $p$-stable rank
of the sum in Theorem~\ref{t_sr9} can be arbitrarily close to one.}

\begin{example}
\Ilse{Let 
\begin{align*}
\ma=\begin{bmatrix}0&0\\ 0&1\end{bmatrix},\qquad 
\mb=\begin{bmatrix}\beta&0\\ 0&0\end{bmatrix}, \qquad
\ma+\mb=\begin{bmatrix}1&0\\ 0&\beta\end{bmatrix}
\end{align*}
where $\beta>1$ where $\sr_p(\mb)=1=\rank(\mb)$. Then
\begin{align*}
\sr_p(\ma+\mb)=\frac{1^p+\beta^p}{\beta^p}=1+\frac{1}{\beta^p}
\rightarrow 1\quad \mathrm{as}\ \beta\rightarrow \infty.
\end{align*}
}
\end{example}

As a particular case of Theorem~\ref{t_sr9}, the square root of the 2-stable rank satisfies the second
inequality in (\ref{e_sum3}) for Hermitian
positive semi-definite matrices.

\begin{corollary}
Let $\ma,\mb\in\cnn$ be Hermitian positive semi-definite.
If $\sr(\mb)=1$ then
\Ilse{\begin{align*}
\sqrt{\sr(\ma+\mb)}\leq\sqrt{\sr(\ma)}+ 1.
\end{align*}}
\end{corollary}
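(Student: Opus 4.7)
The plan is to recognize this corollary as the $p=2$ specialization of Theorem~\ref{t_sr9}, after converting the hypothesis $\sr(\mb)=1$ into the equivalent hypothesis $\rank(\mb)=1$ that Theorem~\ref{t_sr9} requires.

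First I would show that for any non-zero matrix $\mb\in\cnn$,
\begin{align*}
\sr(\mb)=1 \quad\Longleftrightarrow\quad \rank(\mb)=1.
\end{align*}
Writing $\sr(\mb)=\|\mb\|_F^2/\|\mb\|_2^2 = 1 + \sigma_2(\mb)^2/\sigma_1(\mb)^2+\cdots+\sigma_r(\mb)^2/\sigma_1(\mb)^2$, equality with $1$ forces $\sigma_j(\mb)=0$ for all $j\geq 2$, hence $\rank(\mb)=1$. The converse is immediate. One direction of this equivalence is also recorded in Example~\ref{ex_sr2}.

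Next I would invoke Theorem~\ref{t_sr9} with $p=2$. Since $\sr_2(\cdot)=\sr(\cdot)$ by Remark~\ref{r_unify}, and since $\rank(\mb)=1$ holds by the previous step, Theorem~\ref{t_sr9} yields
\begin{align*}
\sqrt{\sr(\ma+\mb)}-\sqrt{\sr(\ma)}\leq 1,
\end{align*}
as desired. There is essentially no obstacle here; the corollary is a direct specialization, and the only subtlety is translating the hypothesis from $\sr(\mb)=1$ to $\rank(\mb)=1$.
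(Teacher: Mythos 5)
Your proposal is correct and matches the paper's (implicit) argument: the corollary is exactly Theorem~\ref{t_sr9} specialized to $p=2$ via $\sr_2=\sr$. Your explicit verification that $\sr(\mb)=1$ is equivalent to $\rank(\mb)=1$ for a non-zero matrix is a small but worthwhile addition, since the paper leaves that translation of hypotheses unstated.
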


Theorem~\ref{t_intdim9} shows that the 1-stable rank satisfies the second
inequality in (\ref{e_sum3}) for Hermitian
positive semi-definite matrices.

\subsection{Product inequalities for the p-stable rank}\label{s_pproduct}
We derive product inequalities for the $p$-stable rank,
as an alternative to the rank inequalities in Section~\ref{s_rankprod}. 

Unfortunately, we were not able to exploit the many
existing 
product inequalities for Schatten $p$-norms
\cite[Section IV.2]{Bhatia97}, \cite[Section 3.5]{HoJoII}
because they tend to hold only for the Hermitian positive
semi-definite polar factor of the product, rather than
the product itself.

The inequalities below, which require one
of the factors to be nonsingular,
bound the $p$-stable
rank of the product by the $p$-stable rank of 
the other factor and the $p$th power of the condition
number of the nonsingular factor.

\begin{theorem}\label{t_srp1}
If $\ma\in\cmm$ is nonsingular, and $\mb\in\cmn$, then
\begin{align*}
\frac{\sr_p(\mb)}{\kappa_2(\ma)^p}\leq 
\sr_p(\ma\mb)\leq \kappa_2(\ma)^p\sr_p(\mb).
\end{align*}
\end{theorem}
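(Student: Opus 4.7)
The plan is to mimic the proof of the corresponding result for the ordinary stable rank (Theorem~\ref{t_sr1} in the removed block), replacing the Frobenius norm by the Schatten $p$-norm and then taking $p$th powers. The two key ingredients are already packaged in Lemma~\ref{l_pnorm}: the strong submultiplicativity bound (\ref{e_ssm}), $\spn{\mc\ma\mb}\le \|\mc\|_2\|\mb\|_2\spn{\ma}$, and the ordinary two-norm submultiplicativity $\|\ma\mb\|_2\le \|\ma\|_2\|\mb\|_2$. Nonsingularity of $\ma$ lets us invoke these bounds in both directions by inserting $\ma^{-1}\ma = \mi_m$.

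For the upper bound, I would first apply (\ref{e_ssm}) with $\mc=\ma$ (and no right factor) to obtain $\spn{\ma\mb}\le \|\ma\|_2\,\spn{\mb}$. Next, writing $\mb = \ma^{-1}(\ma\mb)$ and using the two-norm bound gives $\|\mb\|_2\le \|\ma^{-1}\|_2\|\ma\mb\|_2$, i.e.\ $1/\|\ma\mb\|_2 \le \|\ma^{-1}\|_2/\|\mb\|_2$. Raising both inequalities to the $p$th power and multiplying yields
\[
\sr_p(\ma\mb)=\frac{\spn{\ma\mb}^p}{\|\ma\mb\|_2^p}
\le \|\ma\|_2^p\|\ma^{-1}\|_2^p\cdot\frac{\spn{\mb}^p}{\|\mb\|_2^p}
=\kappa_2(\ma)^p\sr_p(\mb).
\]

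For the lower bound, I would run the same argument with the roles of $\ma$ and $\ma^{-1}$ swapped. From $\mb=\ma^{-1}(\ma\mb)$ and (\ref{e_ssm}), $\spn{\mb}\le \|\ma^{-1}\|_2\spn{\ma\mb}$, so $\spn{\ma\mb}^p\ge \spn{\mb}^p/\|\ma^{-1}\|_2^p$. From $\|\ma\mb\|_2\le \|\ma\|_2\|\mb\|_2$, we get $1/\|\ma\mb\|_2^p \ge 1/(\|\ma\|_2^p\|\mb\|_2^p)$. Multiplying gives
\[
\sr_p(\ma\mb)\ge \frac{1}{\|\ma^{-1}\|_2^p\|\ma\|_2^p}\cdot\frac{\spn{\mb}^p}{\|\mb\|_2^p}=\frac{\sr_p(\mb)}{\kappa_2(\ma)^p}.
\]

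There is no real obstacle here; the only thing to be careful about is keeping track of which of the two inequalities (numerator vs.\ denominator) requires the strong submultiplicativity from Lemma~\ref{l_pnorm} and which only needs the standard two-norm submultiplicativity, and ensuring that nonsingularity of $\ma$ is invoked at exactly the right step so that $\ma^{-1}$ makes sense. The bounds are tight and reduce to equality when $\ma$ is a positive scalar multiple of a unitary matrix, consistent with the remark after Theorem~\ref{t_sr1} in the deleted block.
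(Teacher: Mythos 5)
Your proof is correct and follows essentially the same route as the paper's: strong submultiplicativity of the Schatten $p$-norm (\ref{e_ssm}) for the numerator, insertion of $\ma^{-1}\ma$ together with two-norm submultiplicativity for the denominator, and raising to the $p$th power. Nothing is missing.
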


\begin{proof}
Applying strong submultiplicativity (\ref{e_ssm})
\begin{align*}
\spn{\ma\mb}\leq \|\ma\|_2\spn{\mb}
\end{align*}
and the singular value product inequalities \cite[7.3.P16]{HoJoI}
to the $p$-stable rank of a product gives the upper bound
\begin{align*}
\sr_p(\ma\mb)\leq \|\ma\|_2^p \frac{\spn{\mb}^p}{\|\ma\mb\|_2^p}
\leq \underbrace{\|\ma\|_2^p\|\ma^{-1}\|_2^p}_{\kappa_2(\ma)^p}\underbrace{\frac{\spn{\mb}^p}{\|\mb\|_2^p}}_{\sr_p(\mb)}.
\end{align*}

For the lower bound, apply the strong submultiplicativity
(\ref{e_ssm})
\begin{align*}
\spn{\mb}=\spn{\ma^{-1}\ma\mb}\leq \|\ma^{-1}\|_2
\spn{\ma\mb}
\end{align*}
to the $p$-stable rank of a product
\begin{align*}
\sr_p(\ma\mb)\geq \frac{1}{\|\ma^{-1}\|_2^p} \frac{\spn{\mb}^p}{\|\ma\mb\|_2^p}
\geq \underbrace{\frac{1}{\|\ma\|_2^p\|\ma^{-1}\|_2^p}}_{1/\kappa_2(\ma)^p}\underbrace{\frac{\spn{\mb}^p}{\|\mb\|_2^p}}_{\sr_p(\mb)}.
\end{align*}
\end{proof}

The bounds in Theorem~\ref{t_srp1} hold with equality 
if $\ma$ is unitary, or real orthogonal.

Below are the special cases of the 2-stable rank, 
and the 1-stable rank for Hermitian positive semi-definite
matrices.

\begin{corollary}\label{c_srp1}
If $\ma\in\cmm$ is nonsingular, and $\mb\in\cmn$, then
\begin{align*}
\frac{\sr(\mb)}{\kappa_2(\ma)^2}\leq 
\sr(\ma\mb)\leq \kappa_2(\ma)^2\sr(\mb).
\end{align*}
If $\ma\in\cnn$ is Hermitian positive definite, and $\mb\in\cnn$ Hermitian positive semi-definite, then
\begin{align*}
\frac{\intdim(\mb)}{\kappa_2(\ma)}\leq
\intdim(\ma\mb)\leq \kappa_2(\ma)\intdim(\mb).
\end{align*}
\end{corollary}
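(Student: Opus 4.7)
The plan is to obtain both inequalities as specializations of Theorem~\ref{t_srp1} via Remark~\ref{r_unify}, which identifies the $2$-stable rank with the stable rank for every matrix, and the $1$-stable rank with the intrinsic dimension on the Hermitian positive semi-definite cone.

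For the stable-rank bound, I would substitute $p=2$ in Theorem~\ref{t_srp1}. Because $\sr_2(\,\cdot\,)=\sr(\,\cdot\,)$ on all matrices, the resulting inequalities are verbatim the first assertion, so no further work is required.

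For the intrinsic-dimension bound, I would substitute $p=1$; Remark~\ref{r_unify} then gives $\sr_1(\mb)=\intdim(\mb)$ because $\mb$ is HPSD. The subtle point is that $\ma\mb$ is generally not Hermitian, so $\sr_1(\ma\mb)$ (built from singular values) need not coincide with $\intdim(\ma\mb)=\trace(\ma\mb)/\|\ma\mb\|_2$ (built from eigenvalues). The latter is nonetheless well-posed because, with $\ma$ HPD and $\mb$ HPSD, $\ma\mb$ is similar to the HPSD matrix $\ma^{1/2}\mb\ma^{1/2}$ and hence has nonnegative eigenvalues. The cleanest route is therefore not to invoke Theorem~\ref{t_srp1} blackboxed at $p=1$, but to re-run its proof with the trace functional replacing $\son{\,\cdot\,}$. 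The two trace-submultiplicativity estimates needed are
\begin{align*}
\trace(\ma\mb) &= \trace(\mb^{1/2}\ma\mb^{1/2}) \leq \|\ma\|_2\,\trace(\mb),\\
\trace(\mb) &= \trace\bigl(\ma^{-1}(\ma^{1/2}\mb\ma^{1/2})\bigr) \leq \|\ma^{-1}\|_2\,\trace(\ma\mb),
\end{align*}
each arising from the L\"owner ordering $\mb^{1/2}\ma\mb^{1/2}\preceq\|\ma\|_2\,\mb$ (respectively its counterpart with $\ma^{-1}$) together with monotonicity of the trace on HPSD matrices. Combining these with $\|\ma\mb\|_2\leq\|\ma\|_2\|\mb\|_2$ and $\|\mb\|_2\leq\|\ma^{-1}\|_2\|\ma\mb\|_2$ reproduces the arithmetic of the proof of Theorem~\ref{t_srp1} and delivers both the upper and lower bounds on $\intdim(\ma\mb)$.

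The main obstacle is purely conceptual: because $\ma\mb$ is not Hermitian, one cannot read off the intrinsic-dimension bound from Theorem~\ref{t_srp1} at $p=1$ through strong submultiplicativity of the Schatten-$1$ norm, and the PSD-trace analogues above must take its place. Once that substitution is made, the remaining book-keeping is a direct transcription of the proof of Theorem~\ref{t_srp1}.
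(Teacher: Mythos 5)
Your proof is correct, and for the intrinsic-dimension half it is actually more careful than the paper, which states Corollary~\ref{c_srp1} without proof as an immediate specialization of Theorem~\ref{t_srp1} at $p=2$ and $p=1$. The $p=2$ case is indeed verbatim substitution, exactly as you say. For $p=1$ you have put your finger on a real subtlety: since $\ma\mb$ is not Hermitian in general, $\sr_1(\ma\mb)=\son{\ma\mb}/\|\ma\mb\|_2$ and $\intdim(\ma\mb)=\trace(\ma\mb)/\|\ma\mb\|_2$ need not coincide; one only has $\trace(\ma\mb)\leq\son{\ma\mb}$ (with equality essentially only when $\ma\mb$ is itself Hermitian positive semi-definite), so the $p=1$ instance of Theorem~\ref{t_srp1} yields the upper bound on $\intdim(\ma\mb)$ but not the lower bound. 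Your replacement argument --- the congruence $\ma\preceq\|\ma\|_2\mi_n$ giving $\trace(\ma\mb)=\trace(\mb^{1/2}\ma\mb^{1/2})\leq\|\ma\|_2\trace(\mb)$, the cyclic-trace identity giving $\trace(\mb)\leq\|\ma^{-1}\|_2\trace(\ma\mb)$, and the two-norm submultiplicativity estimates for the denominators --- is sound and mirrors the structure of the proof of Theorem~\ref{t_srp1} with the trace playing the role of the Schatten-$1$ norm. The net effect is that your write-up supplies a justification the paper leaves implicit (and which a literal reading of ``special case of Theorem~\ref{t_srp1}'' does not deliver for the lower bound), at the cost of a slightly longer argument; the paper's framing buys brevity but glosses over the non-Hermitian product.
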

\Ilse{As in Section~\ref{s_rankprod}, for $\intdim(\ma\mb)$ to be defined, 
the product $\ma\mb$
must also be Hermitian positive definite, which is the case when
 $\ma$ and $\mb$ are simultaneously diagonalizable.}

Below is an extension of Theorem~\ref{t_eq1} that shows
that the rank of the cross product matrix is bounded by the
rank of the matrix.

\begin{theorem}\label{t_eq7}
If $\ma\in\cmn$, then
\begin{align*}
\sr_p(\ma^*\ma)\leq \sr_p(\ma)\qquad
\text{and}\qquad \sr_p(\ma\ma^*)\leq \sr_p(\ma).
\end{align*}
\end{theorem}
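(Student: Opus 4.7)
The plan is to mirror the strategy used in the proof of Theorem~\ref{t_eq1} for the $2$-stable rank, replacing strong submultiplicativity of the Frobenius norm with its Schatten $p$-norm analogue (\ref{e_ssm}). Concretely, I would first invoke strong submultiplicativity in the form
\begin{align*}
\spn{\ma^*\ma} \leq \|\ma^*\|_2 \, \spn{\ma} = \|\ma\|_2\, \spn{\ma},
\end{align*}
and combine it with the familiar two-norm identity $\|\ma^*\ma\|_2 = \|\ma\|_2^2$. Substituting both relations into Definition~\ref{d_pstable} yields
\begin{align*}
\sr_p(\ma^*\ma) = \frac{\spn{\ma^*\ma}^p}{\|\ma^*\ma\|_2^p} \leq \frac{\|\ma\|_2^p\, \spn{\ma}^p}{\|\ma\|_2^{2p}} = \frac{\spn{\ma}^p}{\|\ma\|_2^p} = \sr_p(\ma).
\end{align*}
The inequality for $\ma\ma^*$ follows by applying exactly the same argument with $\ma^*$ in place of $\ma$, using $\|\ma\ma^*\|_2=\|\ma^*\|_2^2=\|\ma\|_2^2$ and the unitary invariance $\spn{\ma}=\spn{\ma^*}$ from Lemma~\ref{l_pnorm}.

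A second, equally short route would be to chain together two earlier-stated facts: by Remark~\ref{r_idsr} we have $\sr_p(\ma^*\ma) = \sr_{2p}(\ma) = \sr_p(\ma\ma^*)$, and by the monotonicity of the $p$-stable rank in $p$ (the Remark on ``Relation between $p$-stable ranks''), since $p \le 2p$, we get $\sr_{2p}(\ma) \leq \sr_p(\ma)$. This alternative avoids manipulating Schatten norms directly, but it is less self-contained and hides the role of strong submultiplicativity that already drove Theorem~\ref{t_eq1}.

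There is no substantial obstacle here; the only thing to be careful about is that the submultiplicativity inequality~(\ref{e_ssm}) must be applied with the two-norm factor $\|\ma^*\|_2=\|\ma\|_2$ rather than the Schatten $p$-norm, so that the $\|\ma\|_2^p$ in the numerator correctly cancels one of the two powers of $\|\ma\|_2$ in the denominator $\|\ma^*\ma\|_2^p=\|\ma\|_2^{2p}$. That cancellation is precisely what produces $\sr_p(\ma)$ on the right-hand side and what makes the bound tight when $\ma$ is, say, a scalar multiple of a unitary matrix.
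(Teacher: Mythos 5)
Your primary argument is exactly the paper's proof: apply the strong submultiplicativity (\ref{e_ssm}) in the form $\spn{\ma^*\ma}\leq \|\ma\|_2\spn{\ma}$ together with $\|\ma^*\ma\|_2=\|\ma\|_2^2=\|\ma\ma^*\|_2$, and the cancellation you describe is precisely what the authors intend. Your alternative route via Remark~\ref{r_idsr} and the monotonicity of $\sr_p$ in $p$ is also valid, but the main proof is correct and matches the paper.
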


\begin{proof}
The inequalities follow from 
strong submultiplicativity (\ref{e_ssm}),
$\spn{\ma^*\ma}\leq \|\ma\|_2\spn{\ma}$ and 
$\|\ma^*\ma\|_2=\|\ma\|_2^2=\|\ma\ma^*\|_2$.
\end{proof}

\subsection{Conditioning of the pth root of the p-stable rank}\label{s_pcond}
 Unlike the rank, which is ill-posed, the 
 $p$-stable ranks are well-posed, due to their continuous dependence  on the matrix elements. In particular, the square root of the stable rank and the intrinsic dimension are well-posed. 

 The bounds below show that the $p$th root of the $p$-stable rank is well-conditioned in the normwise absolute sense. The conditioning improves if the
 matrix and the perturbation are Hermitian positive semi-definite.

\begin{theorem}\label{t_cond1}
Let $\ma,\me\in\cmn$ \Ilse{with $\ma\neq \vzero$,}
$r\equiv \rank(\me)$, and
$\epsilon\equiv \frac{\|\me\|_2}{\|\ma\|_2}<1$.
Then
\begin{align*}
 \frac{1}{1+\epsilon}\left(\sqrt[p]{\sr_p(\ma)} 
 -\sqrt[p]{r}\epsilon\right)\leq
\sqrt[p]{\sr_p(\ma+\me)}\leq 
\frac{1}{1-\epsilon}\left(\sqrt[p]{\sr(_p\ma)} +\sqrt[p]{r}\epsilon\right).
\end{align*}
If, in addition, $\ma,\me\in\cnn$ are Hermitian positive semi-definite,
then
\begin{align*}
\frac{\sqrt[p]{\sr_p(\ma)}}{1+\epsilon}\leq 
\sqrt[p]{\sr_p(\ma+\me)}
\leq \sqrt[p]{\sr_p(\ma)}+\sqrt[p]{r}\epsilon.
\end{align*}
\end{theorem}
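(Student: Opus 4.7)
The plan is to bound the numerator $\spn{\ma+\me}$ and the denominator $\|\ma+\me\|_2$ of $\sqrt[p]{\sr_p(\ma+\me)}=\spn{\ma+\me}/\|\ma+\me\|_2$ separately, then combine by dividing the resulting bounds. Throughout, the key conversion between a Schatten $p$-norm and the two-norm of the perturbation is the rank bound (\ref{e_rank}) applied to $\me$: $\spn{\me}\leq \sqrt[p]{r}\,\|\me\|_2$.

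For the general (not necessarily PSD) case, the first step is to apply the triangle inequality and reverse triangle inequality of the Schatten $p$-norm to the numerator to obtain
\begin{align*}
\spn{\ma}-\sqrt[p]{r}\,\|\me\|_2\leq \spn{\ma+\me}\leq \spn{\ma}+\sqrt[p]{r}\,\|\me\|_2.
\end{align*}
For the denominator, the standard two-norm triangle and reverse triangle inequalities, together with $\epsilon<1$, yield
\begin{align*}
(1-\epsilon)\|\ma\|_2\leq \|\ma+\me\|_2\leq (1+\epsilon)\|\ma\|_2.
\end{align*}
Dividing the upper numerator bound by the lower denominator bound, then dividing through by $\|\ma\|_2$ and identifying $\sqrt[p]{\sr_p(\ma)}=\spn{\ma}/\|\ma\|_2$ and $\epsilon=\|\me\|_2/\|\ma\|_2$, delivers the claimed upper bound. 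Dividing the lower numerator bound by the upper denominator bound produces the lower bound.

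For the Hermitian positive semi-definite case, the improvement comes from replacing the crude triangle estimates on $\|\ma+\me\|_2$ by Weyl's monotonicity theorem (\ref{e_weyl}), which gives $\|\ma+\me\|_2\geq \|\ma\|_2$ (removing the factor $1/(1-\epsilon)$ in the upper bound). Combining this with the Schatten triangle inequality on the numerator $\spn{\ma+\me}\leq \spn{\ma}+\sqrt[p]{r}\,\|\me\|_2$ and dividing by $\|\ma\|_2$ yields the PSD upper bound. For the PSD lower bound, I would exploit that the eigenvalues of $\ma+\me$ dominate those of $\ma$ (again by Weyl's monotonicity applied to the individual eigenvalues of the PSD sum), which forces $\spn{\ma+\me}\geq \spn{\ma}$; combining this with the two-norm triangle inequality $\|\ma+\me\|_2\leq (1+\epsilon)\|\ma\|_2$ in the denominator gives $\sqrt[p]{\sr_p(\ma+\me)}\geq \sqrt[p]{\sr_p(\ma)}/(1+\epsilon)$.

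The main obstacle is justifying the PSD monotonicity of the Schatten $p$-norm, $\spn{\ma+\me}\geq \spn{\ma}$ for PSD summands. This is not an immediate consequence of the triangle inequality but follows from applying Weyl's monotonicity eigenvalue-by-eigenvalue and then raising to the $p$th power before summing. Aside from this, the proof is a matter of carefully pairing the right numerator bound with the right denominator bound to match the symmetric form of the stated inequalities.
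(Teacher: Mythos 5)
Your proposal is correct and follows essentially the same route as the paper's proof: triangle and reverse-triangle inequalities on the numerator and denominator combined with the rank bound (\ref{e_rank}) for the general case, and Weyl's monotonicity (\ref{e_weyl}) for the Hermitian positive semi-definite refinement. Your explicit justification of $\spn{\ma+\me}\geq\spn{\ma}$ via eigenvalue-by-eigenvalue monotonicity is a detail the paper leaves implicit, but it is the same underlying argument.
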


\begin{proof}
If $\sr_p(\me)=0$, then the bounds clearly hold. Thus, we assume 
$\sr_p(\me)\neq 0$.

\begin{enumerate}
\item First set of bounds.\\
For the lower bound, the triangle inequality and 
(\ref{e_rank}) imply
\begin{align*}
\sqrt[p]{\sr_p(\ma+\me)}&=
\frac{\spn{\ma+\me}}{\|\ma+\me\|_2}
\geq \frac{\spn{\ma}-\spn{\me}}{\|\ma\|_2+\|\me\|_2}\\
&=\frac{\|\ma\|_2}{\|\ma\|_2+\|\me\|_2}
\left(\sqrt[p]{\sr_p(\ma)}
-\frac{\spn{\me}}{\|\ma\|_2}\right)\\
&=\frac{1}{1+\epsilon}\left(\sqrt[p]{\sr_p(\ma)}
-\frac{\spn{\me}}{\|\ma\|_2}\right)\\
&\geq \frac{1}{1+\epsilon}\left(\sqrt[p]{\sr_p(\ma)}
-\sqrt[p]{r}\frac{\|\me\|_2}{\|\ma\|_2}\right)
=\frac{1}{1+\epsilon}\left(\sqrt[p]{\sr_p(\ma)}
-\sqrt[p]{r}\epsilon\right).
\end{align*}
The upper bound is derived analogously.
\item Second set of bounds.\\
In the lower bound, $\me$ being Hermitian positive
semi-definite implies
\begin{align*}
\sqrt[p]{\sr_p(\ma+\me)}&=\frac{\spn{\ma+\me}}{\|\ma+\me\|_2}
\geq\frac{\spn{\ma}}{\|\ma\|_2+\|\me\|_2}
=\frac{\sqrt[p]{\sr_p(\ma)}}{1+\epsilon}.
\end{align*}

Applying Weyl's monotonicity theorem (\ref{e_weyl}),
$\|\ma+\me\|_2\geq\|\ma\|_2$,
to the upper bound gives
\begin{align*}
\sqrt[p]{\sr_p(\ma+\me)}&=\frac{\spn{\ma+\me}}{\|\ma+\me\|_2}
\leq \frac{\spn{\ma+\me}}{\|\ma\|_2}\leq 
\sqrt[p]{\sr_p(\ma)}+\frac{\spn{\me}}{\|\ma\|_2}\\
&\leq \sqrt[p]{\sr(_p\ma)}+\sqrt[p]{r}\epsilon.
\end{align*}
\end{enumerate}
\end{proof}

Below is the special case of the 2-stable rank.

\begin{corollary}\label{c_cond1} 
Let $\ma,\me\in\cmn$ with
$r\equiv \rank(\me)\leq \min\{m,n\}$ and
$\epsilon\equiv \frac{\|\me\|_2}{\|\ma\|_2}<1$. Then
\begin{align*}
 \frac{1}{1+\epsilon}(\sqrt{\sr(\ma)} -\sqrt{r}\epsilon)\le
\sqrt{\sr(\ma+\me)}\leq \frac{1}{1-\epsilon}(\sqrt{\sr(\ma)} +\sqrt{r}\epsilon).
\end{align*}
If $\ma,\me\in\cnn$ are also Hermitian positive semi-definite,
then
\begin{align*}
\frac{\sqrt{\sr(\ma)}}{1+\epsilon}\leq \sqrt{\sr(\ma+\me)}
\leq \sqrt{\sr(\ma)}+\sqrt{r}\epsilon.
\end{align*}
\end{corollary}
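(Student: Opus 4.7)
The plan is to obtain Corollary~\ref{c_cond1} as an immediate specialization of Theorem~\ref{t_cond1} to the case $p=2$. By Remark~\ref{r_unify}, the $2$-stable rank of a general matrix coincides with the ordinary stable rank, i.e.\ $\sr_2(\ma)=\sr(\ma)$, so the symbol $\sqrt[p]{\sr_p(\cdot)}$ with $p=2$ becomes exactly $\sqrt{\sr(\cdot)}$.

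First I would verify that the hypotheses of Theorem~\ref{t_cond1} are met: we have $\ma,\me\in\cmn$ with $r=\rank(\me)\le\min\{m,n\}$ and $\epsilon=\|\me\|_2/\|\ma\|_2<1$, which are precisely the hypotheses of the corollary. Substituting $p=2$ into the first set of bounds of Theorem~\ref{t_cond1} produces
\begin{equation*}
\frac{1}{1+\epsilon}\bigl(\sqrt{\sr(\ma)}-\sqrt{r}\,\epsilon\bigr)\leq \sqrt{\sr(\ma+\me)}\leq \frac{1}{1-\epsilon}\bigl(\sqrt{\sr(\ma)}+\sqrt{r}\,\epsilon\bigr),
\end{equation*}
which is the first assertion of the corollary.

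Next, if in addition $\ma,\me\in\cnn$ are Hermitian positive semi-definite, the second set of bounds of Theorem~\ref{t_cond1} with $p=2$ yields
\begin{equation*}
\frac{\sqrt{\sr(\ma)}}{1+\epsilon}\leq \sqrt{\sr(\ma+\me)}\leq \sqrt{\sr(\ma)}+\sqrt{r}\,\epsilon,
\end{equation*}
which is the second assertion. Since Theorem~\ref{t_cond1} has already been established in full generality, there is no real obstacle: the proof reduces to a one-line invocation plus the identification $\sr_2=\sr$. The only thing worth noting is the implicit but harmless typo-handling around the $\sr(_p\ma)$ expression in Theorem~\ref{t_cond1}, which should be read as $\sr_p(\ma)$ before setting $p=2$.
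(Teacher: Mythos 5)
Your proposal is correct and matches the paper's intent exactly: the corollary is stated as the special case $p=2$ of Theorem~\ref{t_cond1}, with $\sr_2(\cdot)=\sr(\cdot)$ from Remark~\ref{r_unify}, and the paper offers no further argument beyond this specialization. Your reading of the typographical slip $\sr(_p\ma)$ as $\sr_p(\ma)$ is also the right one.
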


Below is the special case of the 1-stable rank for
Hermitian positive semi-definite matrices

\begin{corollary}\label{c_cond2}
Let $\ma\in\cnn$ be Hermitian positive semi-definite and 
$\me\in\cnn$ be Hermitian. If $\ma+\me$ is Hermitian positive
semi-definite and $\epsilon\equiv \frac{\|\me\|_2}{\|\ma\|_2}<1$, then 
\begin{align*}
 \frac{1}{1+\epsilon}(\intdim(\ma) -n\epsilon)\le
\intdim(\ma+\me)\leq \frac{1}{1-\epsilon}(\intdim(\ma) +n\epsilon).
\end{align*}
If, in addition, $\me$ is also positive semi-definite, then 
\begin{align*}
\frac{\intdim(\ma)}{1+\epsilon}\leq \intdim(\ma+\me)\leq \intdim(\ma)+n\epsilon.
\end{align*}
\end{corollary}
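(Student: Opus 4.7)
The plan is to derive both inequality pairs as direct specializations of Theorem~\ref{t_cond1} at $p=1$, combined with Remark~\ref{r_unify} which identifies the $1$-stable rank with the intrinsic dimension on Hermitian positive semi-definite matrices. The argument splits into the two cases exactly as stated, and in both the bound $r = \rank(\me)\leq n$ is used to replace the rank-dependent term by an $n$-dependent term.

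For the first pair, I would start from the first set of bounds in Theorem~\ref{t_cond1}, which applies to \emph{any} $\ma,\me\in\cmn$ with $\epsilon<1$. Specializing to $p=1$ gives
\begin{align*}
\frac{1}{1+\epsilon}\left(\sr_1(\ma) - r\epsilon\right)\ \leq\ \sr_1(\ma+\me)\ \leq\ \frac{1}{1-\epsilon}\left(\sr_1(\ma) + r\epsilon\right).
\end{align*}
Because $\ma$ is Hermitian positive semi-definite, Remark~\ref{r_unify} yields $\sr_1(\ma)=\intdim(\ma)$, and because $\ma+\me$ is assumed Hermitian positive semi-definite, $\sr_1(\ma+\me)=\intdim(\ma+\me)$. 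To conclude, I would replace $r$ by $n$ on both sides: on the upper bound this enlarges $r\epsilon$ and thus weakens the inequality in the correct direction, while on the lower bound it makes $\sr_1(\ma) - r\epsilon$ smaller, again weakening the inequality in the correct direction.

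For the second pair, both $\ma$ and $\me$ are Hermitian positive semi-definite, so I would invoke the second (stronger) set of bounds in Theorem~\ref{t_cond1} at $p=1$, obtaining
\begin{align*}
\frac{\sr_1(\ma)}{1+\epsilon}\ \leq\ \sr_1(\ma+\me)\ \leq\ \sr_1(\ma) + r\epsilon,
\end{align*}
and then apply the same $\sr_1 \leftrightarrow \intdim$ identification together with $r\leq n$ on the right-hand side.

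There is no substantive obstacle; the only point requiring care is the direction of the $r \leq n$ substitution in the lower bound of the first pair, which is a routine check. The proof is essentially mechanical, and its main content is recognizing that Corollary~\ref{c_cond2} is the $p=1$ instance of Theorem~\ref{t_cond1} once the Hermitian positive semi-definiteness hypotheses are used to pass between $\sr_1$ and $\intdim$.
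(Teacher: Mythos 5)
Your proposal is correct and matches the paper's intended derivation: the corollary is presented as the $p=1$ specialization of Theorem~\ref{t_cond1}, using Remark~\ref{r_unify} to identify $\sr_1$ with $\intdim$ on the Hermitian positive semi-definite matrices $\ma$ and $\ma+\me$, and replacing $r=\rank(\me)$ by $n$ in the direction that only weakens each bound. The only point worth stating explicitly, which you handle correctly, is that the lower bound of the first pair requires $r\le n$ to \emph{decrease} the left-hand side, so the substitution is valid there as well.
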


\subsection*{Acknowledgement}
We thank Srinivas Eswar for informative discussions, and the reviewers for helpful suggestions. 

\bibliography{rankbib}

\begin{thebibliography}{10}

\bibitem{AGNZ18}
{\sc S.~Arora, R.~Ge, B.~Neyshabur, and Y.~Zhang}, {\em Stronger generalization
  bounds for deep nets via a compression approach}, in Proceedings of the 35th
  International Conference on Machine Learning (PMLR), vol.~80, 2018,
  pp.~254--263.

\bibitem{Bhatia97}
{\sc R.~Bhatia}, {\em Matrix analysis}, vol.~169 of Graduate Texts in
  Mathematics, Springer-Verlag, New York, 1997.

\bibitem{Derksen22}
{\sc H.~Derksen}, {\em The {$G$}-stable rank for tensors and the cap set
  problem}, Algebra Number Theory, 16 (2022), pp.~1071--1097.

\bibitem{FZH2022}
{\sc R.~Feng, K.~Zheng, Y.~Huang, D.~Zhao, M.~Jordan, and Z.-J. Zha}, {\em Rank
  diminishing in deep neural networks}, in Proceedings, 36th Conference on
  Neural Information Porcessing Systems (NeurIPS 2022), 2022.

\bibitem{GFMA21}
{\sc B.~Georgiev, L.~Franken, M.~Mukherjee, and G.~Arvanitidis}, {\em On the
  impact of stable ranks in deep nets}, 2021.
\newblock arXiv:110.02333.

\bibitem{GovL13}
{\sc G.~H. Golub and C.~F. {Van Loan}}, {\em Matrix computations}, Johns
  Hopkins Studies in the Mathematical Sciences, Johns Hopkins University Press,
  Baltimore, MD, fourth~ed., 2013.

\bibitem{halko2011finding}
{\sc N.~Halko, P.~G. Martinsson, and J.~A. Tropp}, {\em Finding structure with
  randomness: probabilistic algorithms for constructing approximate matrix
  decompositions}, SIAM Rev., 53 (2011), pp.~217--288.

\bibitem{hallman2023monte}
{\sc E.~Hallman, I.~C.~F. Ipsen, and A.~K. Saibaba}, {\em Monte {C}arlo methods
  for estimating the diagonal of a real symmetric matrix}, SIAM J. Matrix Anal.
  Appl., 44 (2023), pp.~240--269.

\bibitem{HoI15}
{\sc J.~T. Holodnak and I.~C.~F. Ipsen}, {\em Randomized approximation of the
  {G}ram matrix: exact computation and probabilistic bounds}, SIAM J. Matrix
  Anal. Appl., 36 (2015), pp.~110--137.

\bibitem{HoJoII}
{\sc R.~A. Horn and C.~R. Johnson}, {\em Topics in matrix analysis}, Cambridge
  University Press, Cambridge, 1991.

\bibitem{HoJoI}
\leavevmode\vrule height 2pt depth -1.6pt width 23pt, {\em Matrix analysis},
  Cambridge University Press, Cambridge, second~ed., 2013.

\bibitem{HKZ2012}
{\sc D.~Hsu, S.~Kakade, and T.~Zhang}, {\em {Tail inequalities for sums of
  random matrices that depend on the intrinsic dimension}}, Electron. Commun.
  Probab., 17 (2012), pp.~no. 14, 13.

\bibitem{HCH2020}
{\sc C.-H. Huang, J.-S. Chen, and C.-C. Hu}, {\em The {S}chatten {$p$}-norm on
  {$\mathbb{R}^n$}}, J. Nonlinear Convex Anal., 21 (2020), pp.~21--29.

\bibitem{Isaacson66}
{\sc E.~Isaacson and H.~B. Keller}, {\em Analysis of numerical methods}, John
  Wiley \& Sons, Inc., New York-London-Sydney, 1966.

\bibitem{Kawamura24}
{\sc K.~Kawamura}, {\em Higher connected stable ranks and their rational
  variants of {AF} algebras}, Rocky Mountain J. Math., 54 (2024),
  pp.~1365--1382.

\bibitem{koltchinskii2017concentration}
{\sc V.~Koltchinskii and K.~Lounici}, {\em {Concentration inequalities and
  moment bounds for sample covariance operators}}, Bernoulli, 23 (2017),
  pp.~110 -- 133.

\bibitem{LWZ2024}
{\sc D.~Loveland, X.~Wu, T.~Zhao, D.~Koutra, N.~Shah, and M.~Ju}, {\em
  Understanding and scaling collaborative filtering optimization from the
  perspective of matrix rank}, 2024.
\newblock arXiv:2410.23300.

\bibitem{MN24}
{\sc M.~Meier and Y.~Nakatsukasa}, {\em Fast randomized numerical rank
  estimation for numerically low-rank matrices}, Linear Algebra Appl., 686
  (2024), pp.~1--32.

\bibitem{Par98}
{\sc B.~N. Parlett}, {\em The symmetric eigenvalue problem}, vol.~20 of
  Classics in Applied Mathematics, Society for Industrial and Applied
  Mathematics (SIAM), Philadelphia, PA, 1998.

\bibitem{RJ2010}
{\sc M.~Ra\"issouli and I.~H. Jebril}, {\em Various proofs for the decrease
  monotonicity of the{S}chatten's power norm, various families of
  {$\mathbb{R}^n$}-norms and some open problems}, Int. J. Open Probl. Comput.
  Sci. Math., 3 (2010), pp.~164--174.

\bibitem{roosta2015improved}
{\sc F.~Roosta-Khorasani and U.~Ascher}, {\em Improved bounds on sample size
  for implicit matrix trace estimators}, Found. Comput. Math., 15 (2015),
  pp.~1187--1212.

\bibitem{rudelson2007sampling}
{\sc M.~Rudelson and R.~Vershynin}, {\em Sampling from large matrices: an
  approach through geometric functional analysis}, J. ACM, 54 (2007), pp.~Art.
  21, 19.

\bibitem{STD20}
{\sc A.~Sanyal, P.~H.~S. Torr, and P.~K. Dokania}, {\em Stable rank
  normalization for improved generalization in neural networks and {GANs}}, in
  International Conference on Learning Representations (ICLR), 2020.
\newblock arXiv:1906.04659.

\bibitem{Stewart1984}
{\sc G.~W. Stewart}, {\em Rank degeneracy}, SIAM J. Sci. Statist. Comput., 5
  (1984), pp.~403--413.

\bibitem{Stewart1987}
\leavevmode\vrule height 2pt depth -1.6pt width 23pt, {\em Collinearity and
  least squares regression}, Statist. Sci., 2 (1987), pp.~68--100.
\newblock With discussion.

\bibitem{Tropp2015}
{\sc J.~A. Tropp}, {\em An introduction to matrix concentration inequalities},
  Foundations and Trends in Machine Learning, 8 (2015), pp.~1--230.

\bibitem{Vershynin2009}
{\sc R.~Vershynin}, {\em {Introduction to the non-asymptotic analysis of random
  matrices}}, Compressed Sensing: Theory and Applications,  (2009),
  pp.~210--268.

\bibitem{Versh18}
\leavevmode\vrule height 2pt depth -1.6pt width 23pt, {\em High-dimensional
  probability}, vol.~47 of Cambridge Series in Statistical and Probabilistic
  Mathematics, Cambridge University Press, Cambridge, 2018.
\newblock An introduction with applications in data science, With a foreword by
  Sara van de Geer.

\end{thebibliography}
\bibliographystyle{siam}

\end{document}